
\documentclass[12pt]{article}
\usepackage{amssymb}
\usepackage{graphicx}
\usepackage{amsmath}
\usepackage{harvard}

\setcounter{MaxMatrixCols}{10}

\newtheorem{theorem}{Theorem}

\newtheorem{condition}[theorem]{Condition}

\newtheorem{corollary}[theorem]{Corollary}

\newtheorem{example}[theorem]{Example}

\newtheorem{lemma}[theorem]{Lemma}
\newtheorem{notation}[theorem]{Notation}

\newtheorem{remark}[theorem]{Remark}

\newenvironment{proof}[1][Proof]{\textbf{#1.} }{\ \rule{0.5em}{0.5em}}
\input{tcilatex}
\setlength{\textwidth}{6.5in}
\textheight=9in
\setlength{\topmargin}{-0.6in}
\setlength{\oddsidemargin}{0.05in}
\renewcommand{\baselinestretch}{1}
\setcounter{secnumdepth}{-2}

\begin{document}

\title{{\vspace{-0.8in} \textbf{Perturbed Linear-Quadratic Control Problems
and Their Probabilistic Representations }}}
\author{Coskun Cetin\thanks{%
CSU, Department of Mathematics and Statistics, 6000 J St., Sacramento, CA
95819. Ph: (916) 278-6221. Fax: (916) 278-5586. Email: cetin@csus.edu}}
\date{December 2012}
\maketitle

\begin{abstract}
We consider some certain nonlinear perturbations of the stochastic
linear-quadratic optimization problems and study the connections between
their solutions and the corresponding Markovian backward stochastic
diferential equations (BSDEs). Using the methods of stochastic control,
nonlinear partial differential equations (PDEs) and BSDEs, we identify
conditions for the solvability of the problem and obtain some regularity
properties of the solutions.
\end{abstract}

\begin{quotation}
\textbf{Key Words}: Stochastic control, HJB equation, FBSDEs, \medskip

\textbf{AMS Subject Classification}: 60H10, 49J20, 43E20, 65C05 \bigskip
\end{quotation}

\section{1. Introduction}

Stochastic optimal control problems and their connections with nonlinear
PDEs and backward stochastic differential equations (BSDEs) have been
subject of extensive research thanks to their wide range of potential
applications in engineering, financial economics and related areas. However,
explicit solutions to such problems can be obtained only in a few special
cases where the state and control variables usually appear linearly in the
state dynamics, for example, the linear-quadratic regulator (LQR) problems.
Both theoretical and computational issues arise when some nonlinear terms
are added to the system, sometimes representing the effect of a sudden
outside force (perturbation) to the variable of interest. In this paper, we
consider nonlinear perturbations only in the drift term of a state variable,
study the properties of the solution to the corresponding control problem
and then compare it with the standard (unperturbed) LQR problems. Our
approach involves using the connections between some quasilinear PDEs and a
class of forward-backward stochastic differential equations (FBSDEs) of the
following Markovian form 
\begin{eqnarray}
dX(t) &=&\mu (t,X(t))dt+\sigma (t,X(t))dW(t),\text{ }0\leq t\leq T  \notag \\
dY(t) &=&-F(t,X(t),Y(t),Z(t))dt+Z(t)dW(t),\text{ }0\leq t\leq T  \label{p0}
\\
X(0) &=&x;\text{ \ }Y(T)=g(X(T))  \notag
\end{eqnarray}%
where $\mu $ and $\sigma $ are the \textit{drift} and \textit{diffusion}
terms, respectively, of the forward process $X$; $F$ is the \textit{driver}
term of the backward process $Y$, and $Y(T)=g(X(T))$\ is the terminal
condition. We refer the reader to the books by Yong and Zhou (1999), Ma and
Yong (1999) and the survey paper by El Karoui et. al. (1997) for the general
theory and applications of backward stochastic differential equations
(BSDEs). The existence-uniqueness results for nonlinear BSDEs were provided
by Pardoux and Peng (1990), Mao (1995), Lepeltier and San Martin (1997,
1998), Kobylanski (2000), Briand and Hu (2006, 2008), and Cetin (2012),
among others.\ 

The connections between decoupled FBSDEs and quasilinear PDE's were first
stated by Pardoux and Peng (1992), and Peng (1992) by generalising
Feynman-Kac representation of PDE's. A version of their results that is
relevant to the system (\ref{p0}) is given below:

\begin{theorem}[Pardoux and Peng, 1992]
\label{t1}Consider the following parabolic PDE: 
\begin{eqnarray}
v_{{\large t}}+\mu v_{{\large x}}+F(t,x,v,\sigma v_{{\large x}})+\frac{1}{2}%
\sigma ^{2}v_{{\large xx}} &=&0  \label{k2.1} \\
v(T,x) &=&g(x),  \notag
\end{eqnarray}%
together with the decoupled system of FBSDEs (\ref{p0}). If the PDE (\ref%
{k2.1}) has a (classical) solution\ $v$, then the pair $(Y,Z)$\ with $%
Y^{s,x}(t)=v(t,X^{{\large s,x}}(t)),$ and\ $Z^{{\large s,x}}(t)=\sigma (t,X^{%
{\large s,x}}(t))v_{{\large x}}(t,X^{{\large s,x}}(t))$ solve the BSDE in (%
\ref{p0}). Conversely, if the system (\ref{p0}) has a unique (adapted)
solution, then $v(t,x)\triangleq Y^{t,x}(t)$\ is a viscosity solution to the
PDE (\ref{k2.1}). Moreover, this solution is unique if the coefficients
involved are uniformly Lipshitz.
\end{theorem}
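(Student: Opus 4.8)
The plan is to prove the two implications separately. For the direction ``the PDE admits a classical solution $v$ $\Rightarrow$ the pair $(Y,Z)$ solves the BSDE,'' I would apply It\^o's formula to $v(t,X^{s,x}(t))$ along the forward diffusion in (\ref{p0}), which gives
\[
dv(t,X^{s,x}(t))=\left(v_{t}+\mu v_{x}+\tfrac{1}{2}\sigma ^{2}v_{xx}\right)(t,X^{s,x}(t))\,dt+(\sigma v_{x})(t,X^{s,x}(t))\,dW(t).
\]
The PDE (\ref{k2.1}) identifies the bracketed drift with $-F\big(t,X^{s,x}(t),v(t,X^{s,x}(t)),(\sigma v_{x})(t,X^{s,x}(t))\big)$, so setting $Y^{s,x}(t):=v(t,X^{s,x}(t))$ and $Z^{s,x}(t):=(\sigma v_{x})(t,X^{s,x}(t))$ reproduces exactly the backward equation, while $v(T,\cdot)=g$ yields the terminal condition $Y^{s,x}(T)=g(X^{s,x}(T))$. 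The only point requiring care is admissibility: one checks $\sup_{t}\mathbb{E}|Y^{s,x}(t)|^{2}<\infty$ and $\mathbb{E}\int_{s}^{T}|Z^{s,x}(t)|^{2}\,dt<\infty$ from the assumed regularity and growth of $v$ together with the standard moment bounds for $X^{s,x}$ (or, if one only controls $v$ locally, via a localizing stopping time followed by a limit).

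For the converse I would first check that $v(t,x):=Y^{t,x}(t)$ is a bona fide deterministic function: the solution triple on $[t,T]$ is adapted to the Brownian increments after time $t$, so $Y^{t,x}(t)$ lies in the ($\mathbb{P}$-augmented) trivial $\sigma$-field and is therefore constant. Continuity of $v$ in $(t,x)$ then follows from the continuous dependence of FBSDE solutions on initial data, a standard a priori estimate under the uniform Lipschitz hypotheses. The structural ingredient needed next is the flow identity $Y^{t,x}(r)=v(r,X^{t,x}(r))$ for $t\le r\le T$; this follows from uniqueness of the FBSDE solution together with the Markov property of $X$, since the restriction of $(X^{t,x},Y^{t,x},Z^{t,x})$ to $[r,T]$ and the solution started afresh from $(r,X^{t,x}(r))$ must coincide.

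Given these facts, the viscosity property is obtained by Peng's test-function argument. Suppose $\varphi\in C^{1,2}$ and $v-\varphi$ has a local minimum equal to $0$ at $(t_{0},x_{0})$, so $v\ge \varphi$ nearby and $v(t_{0},x_{0})=\varphi(t_{0},x_{0})$; write $\mathcal{L}\varphi:=\varphi_{t}+\mu \varphi_{x}+\tfrac{1}{2}\sigma ^{2}\varphi_{xx}$. Starting the forward process at $(t_{0},x_{0})$ and comparing, on $[t_{0},t_{0}+h]$ for small $h>0$, the true backward solution (terminal value $v(t_{0}+h,X(t_{0}+h))$) with the backward solution $(\widetilde{Y},\widetilde{Z})$ having terminal value $\varphi(t_{0}+h,X(t_{0}+h))$, the BSDE comparison theorem and the flow identity give $\varphi(t_{0},x_{0})=Y(t_{0})\ge \widetilde{Y}(t_{0})$. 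Expanding $\varphi(t_{0},x_{0})$ and $\varphi(t_{0}+h,X(t_{0}+h))$ by It\^o, and using that $\widetilde{Y}(t_{0})$ is deterministic so the stochastic integrals vanish in expectation, I obtain
\[
0\ \ge\ \mathbb{E}\int_{t_{0}}^{t_{0}+h}\Big[F\big(r,X(r),\widetilde{Y}(r),\widetilde{Z}(r)\big)+(\mathcal{L}\varphi)(r,X(r))\Big]\,dr.
\]
Dividing by $h$ and letting $h\downarrow 0$ yields $(\mathcal{L}\varphi)(t_{0},x_{0})+F\big(t_{0},x_{0},v(t_{0},x_{0}),(\sigma \varphi_{x})(t_{0},x_{0})\big)\le 0$, the supersolution inequality; the subsolution inequality follows analogously by touching from above. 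The main obstacle is justifying the limit in the $F$-term: this needs $\widetilde{Z}(r)\to (\sigma \varphi_{x})(t_{0},x_{0})$ in mean as $r\downarrow t_{0}$, $h\downarrow 0$, which I would extract from the $L^{2}$ a priori BSDE estimate bounding $\mathbb{E}\int_{t_{0}}^{t_{0}+h}|\widetilde{Z}(r)-(\sigma \varphi_{x})(r,X(r))|^{2}\,dr$ by the mean-square driver mismatch, made $o(1)$ after localizing to keep $X$ near $x_{0}$ and $\widetilde{Y}$ near $\varphi(t_{0},x_{0})$. Finally, uniqueness under uniform Lipschitz coefficients comes from the standard comparison principle for viscosity solutions of quasilinear parabolic equations whose nonlinearity has linear growth in the gradient, which pins down $v$ among continuous solutions of at most polynomial growth.
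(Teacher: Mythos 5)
The paper states this theorem as a quoted result of Pardoux and Peng (1992) and offers no proof of its own, so there is no in-paper argument to match against; what you have written is, in outline, the standard proof from that reference. The forward direction (It\^o's formula applied to $v(t,X^{s,x}(t))$, then substitution of the PDE into the drift) is exactly the computation the paper does carry out later in its own setting, in the proof of Lemma \ref{l1}(ii), so that half is uncontroversial modulo the integrability check you flag. The converse is the nontrivial half, and your outline has the right ingredients: $Y^{t,x}(t)$ deterministic because the solution on $[t,T]$ is measurable with respect to $\sigma(W_r-W_t,\ t\le r\le T)$ while also being $\digamma_t$-measurable; continuity and the flow identity $Y^{t,x}(r)=v(r,X^{t,x}(r))$ from uniqueness plus the Markov property; then Peng's test-function argument via the BSDE comparison theorem. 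Two points should be made explicit rather than left implicit. First, the comparison theorem on $[t_0,t_0+h]$ compares terminal values $v(t_0+h,X(t_0+h))$ and $\varphi(t_0+h,X(t_0+h))$, but $v\ge\varphi$ is only known on a neighborhood of $(t_0,x_0)$, so you must stop $X$ at the exit time of that neighborhood (or control the probability of early exit) before invoking comparison. Second, the limit in the $F$-term rests on the estimate $\frac{1}{h}\,\mathbb{E}\int_{t_0}^{t_0+h}\bigl|\widetilde{Z}(r)-(\sigma\varphi_x)(r,X(r))\bigr|^2\,dr\to 0$, which you correctly identify as the crux; it comes from the a priori $L^2$ estimate for the difference of the BSDE for $\widetilde Y$ and the semimartingale decomposition of $\varphi(r,X(r))$. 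With those two standard refinements supplied, your argument is correct and is the proof the paper is implicitly relying on by citation.
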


We provide the basic definitions and the notations of the paper below.
\medskip

\subsection{1.1\quad Definitions and Notations}

We consider the one-dimensional Euclidean space $%
\mathbb{R}
$, fixed time-horizon $[0,T]$ and a probability space $(\Omega ,\digamma ,P) 
$ where $\digamma =\{\digamma _{t}:0\leq t\leq T\}$ is the complete $\sigma
- $algebra generated by a Brownian motion process $W$. We define the
following function spaces:

\begin{itemize}
\item $C^{p,q}([0,T]\times 
\mathbb{R}
)$: The space of all real-valued measurable functions $f$ $:[0,T]\times 
\mathbb{R}
$ such that $f(t,x)$ is $p$ (respectively, $q$) times continuosly
differentiable with respect to $t$ (respectively, $x$) where $p,q$ are
non-negative integers.

\item $L_{\digamma _{T}}^{p}(\Omega )$: The space of $\digamma _{T}$%
-measurable random variables $H$\ such that $E[\left\vert H\right\vert
^{p}]<\infty $.

\item $L_{\digamma _{T}}^{\infty }(\Omega )$: The space of $\digamma _{T}$%
-measurable essentially bounded\ random variables.

\item $L_{\digamma }^{p}([0,T])$: The space of $\digamma $-adapted processes 
$f$ such that $E[\int\limits_{0}^{T}\left\vert f(t)\right\vert
^{p}dt]<\infty $.

\item $L_{\digamma }^{\infty }([0,T])$: The space of $\digamma $-adapted
essentially bounded processes.

\item $S_{\digamma }^{p}(C[0,T])$: The space of $\digamma $-adapted
continuous processes such that $E[\sup\limits_{0\leq t\leq T}\left\vert
f(t)\right\vert ^{p}dt]<\infty .$
\end{itemize}

For a deterministic function $h(t,x):[0,T]\times 
\mathbb{R}
\rightarrow 
\mathbb{R}
$, the subscript notation denotes partial derivatives: $h_{t}(t,x)=\frac{%
\partial h}{\partial t}(t,x)$, $h_{x}(t,x)=\frac{\partial h}{\partial x}%
(t,x) $ and $h_{xx}(t,x)=\frac{\partial ^{2}h}{\partial x^{2}}(t,x)$. In
particular, for time dependent functions or ODE's, dot\ ($^{\cdot }$)
designates the derivative with respect to time parameter $t$. The notation $%
E_{t}[.]$ will denote the conditional expectation $E[.|\digamma _{t}]$. When
the initial value of a process $X$ is given at time $t$, then $E_{t,x}[.]\ $%
refers to $E[.]\ $with $X_{t}=x$. We also use the following facts and
notation related to the stochastic control theory. For the details and the
proofs of these arguments, one can refer to the books by Fleming and Rishel
(1975), Fleming and Soner (2006), or Yong and Zhou (1999).

Let $U\subset 
\mathbb{R}
$ and $x_{0}\in 
\mathbb{R}
.$\ Then for $U$-valued, $\digamma _{t}$-adapted control processes $u_{t}$,
consider the following control dependent SDE:%
\begin{eqnarray}
dX_{t} &=&a(t,X_{t},u_{t})dt+\sigma (t,X_{t})dW_{t}  \label{p1} \\
X_{0} &=&x_{0}\   \notag
\end{eqnarray}%
Consider also the cost functional 
\begin{equation}
J^{u}(s,x)=E_{s,x}[\int\limits_{s}^{T}f(t,X_{t},u_{t})dt+g(X_{T})]
\label{p2}
\end{equation}%
where the \textit{running cost} $F$, the \textit{terminal cost} $g$ and the
control $u$ belong to some appropriate $L_{\digamma }^{p}$ or $L_{\digamma
_{T}}^{p}$ space. We then define the \textit{value function} as 
\begin{equation}
V(s,x)=\inf\limits_{u\in \mathcal{U}^{s,x}}J^{u}(s,x)  \label{p3}
\end{equation}%
where $\mathcal{U}^{s,x}$\ is the set of all \textit{admissible controls,}
which consists of all $\{\digamma _{t},0\leq s\leq t\leq T\}$ adapted
processes $u=\{u(t),s\leq t\leq T\}$ with values in $%
\mathbb{R}
$ such that $E[\int\limits_{0}^{T}\left\vert u(t)\right\vert ^{2}dt]<\infty $
and the state process $X^{s,x;u}=X$\ in (\ref{p1}) has a unique strong
solution in $L_{\digamma }^{2}$. When there is no ambiguity, the notations $%
X^{s,x;u},$ $\mathcal{U}([0,T],%
\mathbb{R}
)$ will be abbreviated as $X$ and $\mathcal{U},$ respectively. We use
subscript notation $X_{t}$ and $u_{t}$\ for the state and control processes
when the context is clear. If a pair $(X^{\ast },u^{\ast })\ $is optimal for
the problem (\ref{p1})-(\ref{p3}) and the value function satisfies $V\in
C^{1,2}([0,T]\times 
\mathbb{R}
)$, then by dynamic programming principle (DPP) and standard verification
theorems, $V$ solves the following second-order nonlinear PDE: 
\begin{equation}
0=\inf\limits_{u}\{f(t,x,u)+(\mathsf{L}^{u}v)(t,x)\}  \label{p8}
\end{equation}%
with terminal condition $v(T,x)=g(x),$ where $\mathsf{L}$\textsf{\ }is the
backward evolution operator 
\begin{equation}
\mathsf{L}^{u}v(s,x)=v_{s}(s,x)+a(s,x,u)v_{x}(s,x)+\frac{1}{2}\sigma
^{2}v_{xx}(s,x).  \label{L}
\end{equation}%
The equation (\ref{p8}) is called the \textit{Hamilton-Jacobi-Bellman (HJB)
equation} (or \textit{HJB PDE}). The PDE is called \textit{uniformly
parabolic} if $\exists $ $c>0$ such that $\left\vert \sigma (t,x)\right\vert
\geq c$ for all $(t,x)\in \lbrack 0,T]\times 
\mathbb{R}
$. Such PDEs are known to have unique classical solutions under some
regularity and growth conditions, see for example, Fleming and Soner (2006,
IV.4) or Yong and Zhou (1999).

\begin{notation}
We write $u\in \mathcal{U}^{s,x}$ or $u\in \mathcal{U}$ to refer to an
admissible control system $(\Omega ,\digamma ,P,X(.),u(.))$\ when the
context is clear.
\end{notation}

The next section briefly describes how such a system of FBSDEs can be used
to study the properties of the solutions to some certain quasilinear HJB
PDEs corresponding to the stochastic optimal control problems of the form (%
\ref{p1})-(\ref{L}) where only the drift term of the state process is
control-dependent. Such an example is the linear-quadratic regular (LQR)
problems where the diffusion term is control-free, and the value function
has an explicit (quadratic) form which can be solved analytically or
numerically. However, when the drif term of the state equation is not
linear, an explicit form of the value function may not be available since
the corresponding HJB equation doesn't have an analytic solution, in
general. This occurs in the perturbed LQR problems with the drift
coefficient having extra nonlinear terms. A class of such nonlinear
perturbation were studied by Tsai (1978) without a terminal cost term. See
also Nishikawa et al. (1976). For more general cases, some generalized (e.g. 
\textit{viscosity}) solutions should be considered. Even if a smooth
solution exists, there are some other issues to consider: Uniqueness,
regularity properties and the numerical computation of the solutions.

\section{2. The PDE and FBSDE Representations}

In this section, we first assume that the stochastic control problem (\ref%
{p1})-(\ref{p3})\ is solvable with the value function $V(t,x)\in
C^{1,2}([0,T]\times 
\mathbb{R}
)$, and state a general (weak) representation formula for the corresponding
FBSDE system, in the spirit of Theorem \ref{t1}.\ However, we will consider
strong solutions in the rest of the paper.

\begin{lemma}
\label{l1} Let $V(t,x)\in C^{1,2}([0,T]\times 
\mathbb{R}
)$ be a solution to the optimization problem (\ref{p1})-(\ref{p3}) and $%
u^{\ast }=\arg \min\limits_{u}\{f(t,x,u)+a(t,x,u)V_{x}(t,x)\}$ which depends
on $t$ and $x$ deterministically through $u^{\ast }(t,x)=\pi
(t,x,V_{x}(t,x)) $, representing an optimal Markovian control rule $u^{\ast
}(t,X_{t})$. Moreover, assume that for some $p\geq 2$ and for all $s\in
\lbrack 0,T)$, the stochastic integral equation $X_{t}^{s,x}=x+\int%
\limits_{s}^{t}\sigma (r,X_{r})dW_{r}$\ has a weak solution ($\hat{X},\hat{W}%
,\hat{\digamma}$) in $L_{\hat{\digamma}}^{p}([s,T])$ and $f(t,\hat{X},\pi (t,%
\hat{X},V_{x}(t,\hat{X})\in L_{\hat{\digamma}}^{1}([s,T])$. Then

(i) $V$ solves the PDE%
\begin{eqnarray*}
v_{t}+\frac{1}{2}\sigma ^{2}v_{xx}(t,x)+\hat{F}(t,x,\sigma v_{x}(t,x)) &=&0
\\
v(T,x) &=&g(x),
\end{eqnarray*}%
where $\hat{F}(t,x,z)=a(t,x,\pi (t,x,\sigma ^{-1}(t,x)z)+f(t,x,\pi
(t,x,\sigma ^{-1}(t,x)z).$

(ii) A solution to the system 
\begin{equation}
Y_{t}^{s,x}=g(\hat{X}_{T})+\int\limits_{t}^{T}\hat{F}(r,\hat{X}%
_{r},Z_{r})dr-\int\limits_{t}^{T}Z_{r}d\hat{W}_{r}  \label{bsd}
\end{equation}%
is given by $Y_{t}^{s,x}=V(t,\hat{X}_{t}^{s,x})$, $Z_{t}^{s,x}=\sigma
V_{x}(t,\hat{X}_{t}^{s,x})$.
\end{lemma}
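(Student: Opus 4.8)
The plan is to establish part (i) first by rewriting the HJB equation (\ref{p8}) in terms of the optimal feedback rule $\pi$, and then to deduce part (ii) by applying the Feynman–Kac-type representation of Theorem \ref{t1} to the resulting PDE. For part (i), I would start from the hypothesis that $V\in C^{1,2}$ solves (\ref{p8})–(\ref{L}), i.e.
\[
0=\inf_{u}\{f(t,x,u)+V_{t}(t,x)+a(t,x,u)V_{x}(t,x)\}+\tfrac12\sigma^{2}V_{xx}(t,x),
\]
where the $V_{t}$ and $\tfrac12\sigma^{2}V_{xx}$ terms do not depend on $u$ and hence factor out of the infimum. By the definition of $u^{\ast}$ as the argmin and the assumed representation $u^{\ast}(t,x)=\pi(t,x,V_{x}(t,x))$, the infimum is attained and equals $f(t,x,\pi(t,x,V_{x}))+a(t,x,\pi(t,x,V_{x}))V_{x}$. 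Setting $z=\sigma(t,x)V_{x}(t,x)$, so that $V_{x}=\sigma^{-1}z$, this is exactly $\hat F(t,x,\sigma(t,x)V_{x}(t,x))$ by the definition of $\hat F$ given in the statement. Substituting back yields $V_{t}+\tfrac12\sigma^{2}V_{xx}+\hat F(t,x,\sigma V_{x})=0$, and the terminal condition $V(T,x)=g(x)$ is inherited from (\ref{p2})–(\ref{p3}); this proves (i).

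For part (ii), I would observe that the PDE from (i) is precisely of the form (\ref{k2.1}) in Theorem \ref{t1}, with the forward drift $\mu\equiv 0$, diffusion $\sigma(t,x)$, driver $F(t,x,y,z)=\hat F(t,x,z)$ (which happens not to depend on $y$), and terminal data $g$. The associated decoupled FBSDE is then the one whose forward component is $dX_{t}=\sigma(t,X_{t})dW_{t}$, i.e. exactly the SDE $X_{t}^{s,x}=x+\int_{s}^{t}\sigma(r,X_{r})dW_{r}$ assumed in the hypothesis to have a weak solution $(\hat X,\hat W,\hat\digamma)$ in $L^{p}_{\hat\digamma}([s,T])$, and whose backward component, written in integral form, is (\ref{bsd}). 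Applying the direct (``if the PDE has a classical solution'') implication of Theorem \ref{t1} to $v=V$ along the weak solution $\hat X$, we conclude that $(Y,Z)$ with $Y_{t}^{s,x}=V(t,\hat X_{t}^{s,x})$ and $Z_{t}^{s,x}=\sigma(t,\hat X_{t}^{s,x})V_{x}(t,\hat X_{t}^{s,x})$ solves (\ref{bsd}). One can see this concretely by applying It\^o's formula to $V(t,\hat X_{t})$: the drift terms collapse to $-\hat F(t,\hat X_{t},\sigma V_{x}(t,\hat X_{t}))\,dt$ by the PDE in (i), and the martingale term is $\sigma(t,\hat X_{t})V_{x}(t,\hat X_{t})\,d\hat W_{t}$, which is exactly the claimed $(Y,Z)$.

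The main technical obstacle is integrability/admissibility bookkeeping rather than any deep argument: one must check that the candidate pair $(Y,Z)$ actually lies in the solution spaces for which (\ref{bsd}) is well-posed, i.e. that $\int_{t}^{T}Z_{r}\,d\hat W_{r}$ is a genuine (square-integrable, or at least local) martingale and that the Lebesgue integral $\int_{t}^{T}\hat F(r,\hat X_{r},Z_{r})\,dr$ converges. This is where the two explicit hypotheses of the lemma enter: the weak solution $\hat X\in L^{p}_{\hat\digamma}([s,T])$ for some $p\ge 2$ controls moments of $\hat X_{t}$ (and hence, via continuity of $V$, $V_{x}$ and polynomial-type growth bounds, of $Y$ and $Z$), while the assumption $f(t,\hat X,\pi(t,\hat X,V_{x}(t,\hat X)))\in L^{1}_{\hat\digamma}([s,T])$ together with a corresponding control on the $a(\cdot)V_{x}$ piece guarantees $\hat F(\cdot,\hat X,Z)\in L^{1}_{\hat\digamma}([s,T])$. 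A secondary subtlety is that everything is being done on the possibly non-canonical probability space $(\hat\Omega,\hat\digamma,\hat W)$ carrying the weak solution rather than the original one; since Theorem \ref{t1} and It\^o's formula are stated for an arbitrary filtered space supporting a Brownian motion, this causes no real difficulty, but the statement is deliberately ``weak'' for exactly this reason, and I would flag that the uniqueness half of Theorem \ref{t1} is \emph{not} invoked here — only the forward (verification) direction is needed.
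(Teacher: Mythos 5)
Your proposal is correct and follows essentially the same route as the paper: part (i) is the HJB equation (\ref{p8}) rearranged at the minimizer $u^{\ast}=\pi(t,x,V_{x})$, and part (ii) is It\^o's formula applied to $V(t,\hat X_{t})$ along the driftless forward process, with the HJB/PDE identifying the drift as $-\hat F$. One small point in your favor: your version of the infimum correctly carries the factor $V_{x}=\sigma^{-1}z$ on the $a(t,x,\pi)$ term, which the paper's displayed formula for $\hat F$ (and its proof) drops, evidently a typographical slip.
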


\begin{proof}
The part (i) follows from the stochastic optimal control and DPP arguments
in the previous section. For part (ii), let the operator $\mathsf{L}^{u}$\
be as in (\ref{L})\ corresponding to the SDE (\ref{p1}).\ Then applying
Ito's rule to $Y(t)=V(t,\hat{X}(t))$\ and using (\ref{p8}) with $u^{\ast
}=u^{\ast }(t,\hat{X})=\pi (t,\hat{X},\sigma V_{x}(t,\hat{X})$, we get: 
\begin{eqnarray*}
dY &=&\{\mathsf{L}^{u^{\ast }}V(t,\hat{X})-a(t,\hat{X},\pi (t,\hat{X}%
,V_{x}(t,\hat{X}))\}dt+\sigma (t,\hat{X})V_{x}(t,\hat{X})d\hat{W} \\
&=&-\{f(t,\hat{X},\pi (t,\hat{X},V_{x}(t,\hat{X}))+a(t,\hat{X},\pi (t,\hat{X}%
,V_{x}(t,\hat{X}))\}dt+Zd\hat{W} \\
&=&-\hat{F}(t,\hat{X},Z)dt+Zd\hat{W},
\end{eqnarray*}%
which also corresponds to the BSDE representation in (\ref{p0}) and (\ref%
{k2.1}), with $\mu =0$ .\footnote{%
Note that the Brownian motion process W in equation (\ref{bsd}) is not
necessarily the original one due to the weak representation approach.}
\end{proof}

Under some standard regularity and growth conditions (Lipshitz parameters,
linear growth etc.), the HJB PDEs (and the corresponding BSDEs) have unique
solutions. However, the equations that we consider are quite general and
there is no guarantee that a (classical) solution exists or if it exists
whether the solution is unique (in a suitable space). In the next
subsection, our particular interest will be on a more specific modeling of
the state variable $X$ and the cost functional $f$, which has some
interesting applications in economics (controlling macroeconomic variables)
and in engineering (target tracking). We keep the model coefficients as
general as possible in this section to motivate the choice of the particular
functions used in the next section.

\subsection{2.1\quad The Additively Separable Drift and Cost Functions}

We consider the following controlled state process $X=X^{u},$ for $0<t\leq
T: $ 
\begin{eqnarray}
dX_{t} &=&(\mu (t,X_{t})+B(t,X_{t})u_{t})dt+\sigma (t,X_{t})dW(t),  \notag \\
X_{0} &=&x_{0}.  \label{6}
\end{eqnarray}%
where the deterministic functions $B(t,x),\mu (t,x)$\ and $\sigma (t,x)$\
are continuous functions of their arguments. Moreover, for $(t,x,u)\in
\lbrack 0,T)\times 
\mathbb{R}
\times 
\mathbb{R}
$ given, we introduce the following running cost function in (\ref{p2}): 
\begin{equation}
f(t,x,u)=h(t,x)+k(t,x)u^{2}  \label{cost}
\end{equation}%
where $k(t,x)$ is a positive continuous function on $[0,T]\times 
\mathbb{R}
$, and the terminal cost $g(x)$ is such that $g(X_{T})\in L_{F_{T}}^{1}$.

Again, by a heuristic application of DPP as in (\ref{p8}), the value
function $V(s,x)$\ in equation (\ref{p3})\ satisfies the HJB equation 
\begin{eqnarray}
0 &=&\inf\limits_{u}\{f(t,x,u)+(\mathsf{L}^{u}V)(t,x)\}  \notag \\
&=&V_{t}+\frac{1}{2}\sigma ^{2}V_{xx}(t,x)+h(t,x)+\mu V_{x}(t,x)
\label{HjbE} \\
&&+\inf\limits_{u}\{k(t,x)u^{2}+B(t,x)uV_{x}\}.  \notag
\end{eqnarray}

Clearly, the minimum of $k(t,x)u^{2}+B(t,x)uV_{x}$ in (\ref{HjbE})\ is $%
\frac{-B^{2}(t,x)}{4k(t,x)}(V_{x})^{2}$\ with the\ minimizer 
\begin{equation}
u^{\ast }=\pi (t,x,V_{x}(t,x))=\frac{-B(t,x)}{2k(t,x)}V_{x}.  \label{7.5}
\end{equation}

Plugging (\ref{7.5}) into the equation (\ref{HjbE}), the HJB PDE takes the
form of 
\begin{eqnarray}
v_{t}+\frac{1}{2}\sigma ^{2}v_{xx}(t,x)+h(t,x)+\mu v_{x}(t,x)-\frac{%
B^{2}(t,x)}{4k(t,x)}(v_{x})^{2} &=&0  \label{7.7} \\
v(T,x) &=&g(x)  \notag
\end{eqnarray}%
which can also be written as 
\begin{eqnarray}
v_{t}+\frac{1}{2}\sigma ^{2}v_{xx}(t,x)+\mu v_{x}(t,x)+F(t,x,\sigma
v_{x}(t,x)) &=&0  \label{8} \\
v(T,x) &=&g(x)  \notag
\end{eqnarray}%
with 
\begin{equation}
F(t,x,z)=h(t,x)-\frac{H(t,x)}{2}z^{2},\text{ for }(t,x,z)\in \lbrack
0,T]\times 
\mathbb{R}
\times 
\mathbb{R}
,  \label{9}
\end{equation}%
where $H(t,x)=\frac{B^{2}(t,x)}{2k(t,x)\sigma ^{2}(t,x)}$.

\begin{remark}
(a) In a standard LQR problem, the state dynamics in (\ref{6}) has the usual
linear drift and diffusion coefficients, $k(t,x)=k(t)$ and $B(t,x)=B(t)$.\
Moreover, $h(t,x)$ and $g(x)$\ are quadratic cost functions. It is well
known that the condition $k(.)\geq 0$\ ($k(.)>0$, respectively) is a
necessary (sufficient, respectively)\ condition for the standard LQR problem
to be solvable.\newline
(b) If the value function solves the quasilinear PDE (\ref{7.7}), then the
PDE representation given by (\ref{8})-(\ref{9}) can be utilized for a BSDE
representation of the problem by Theorem \ref{t1} or Lemma \ref{l1}. \newline
\end{remark}

\subsection{2.2\quad The FBSDE Interpretation}

In view of Theorem \ref{t1} (and using the similar steps as in Lemma \ref{l1}%
), if the value function $V(t,x)$\ is a smooth solution to the equations (%
\ref{8})-(\ref{9}), then the pair $(Y_{t}^{s,x},Z_{t}^{s,x})$ with $%
Y_{t}=V(t,\tilde{X}_{t})$ and $Z_{t}=\sigma (t,\tilde{X}_{t})V_{x}(t,\tilde{X%
}_{t})$ is a solution to the BSDE 
\begin{eqnarray}
dY_{t}^{s,x} &=&-F(t,\tilde{X}_{t},Z_{t})dt+Z_{t}dW_{t}  \label{18} \\
Y_{T}^{s,x} &=&g(\tilde{X}_{T})  \notag
\end{eqnarray}%
with 
\begin{equation}
\tilde{X}_{t}^{s,x}=x+\int\limits_{s}^{t}\mu (t,\tilde{X}_{r}^{s,x})dr+\int%
\limits_{s}^{t}\sigma (r,\tilde{X}_{r}^{s,x})dW_{r},  \label{18b}
\end{equation}%
and$\ F(t,x,z)$ as in (\ref{9}). However, in general, we don't know if the
PDE has a classical solution since the function $F(s,x,z)$ doesn't satisfy
the usual Lipshitz or linear growth conditions in the state variables $x$
and $z$. It also gets more complicated when the terminal condition $g(X_{T})$
is not bounded.

\begin{remark}
\label{rep}(a) \textit{The representation of (\ref{18})-(\ref{18b}) as a
FBSDE system is not unique. Another representation (as in Lemma \ref{l1})
may be given by the following system, by eliminating the drift term of the
forward process:} 
\begin{eqnarray}
\hat{X}_{t}^{s,x} &=&x+\int\limits_{s}^{t}\sigma (r,\hat{X}_{r})dW_{r}
\label{19} \\
Y_{t}^{s,x} &=&g(\hat{X}_{T})+\int\limits_{t}^{T}\hat{F}(r,\hat{X}%
_{r},Z_{r})dr-\int\limits_{t}^{T}Z_{r}dW_{r}  \notag
\end{eqnarray}%
\textit{where the new driver function} \textit{is }$\hat{F}(t,x,z)=F(t,x,z)+%
\frac{\mu (t,x)}{\sigma (t,x)}z.$ \textit{Each representation has some
advantages depending on the complexity level of the forward and backward
equations in (\ref{18})-(\ref{19}). In this section, the representation (\ref%
{18}) will be used frequently based on the assumption that the forward state
dynamics (\ref{18b}) has a unique solution.}\newline
(b) \textit{When the diffusion term }$\sigma $\textit{\ is time dependent
only, the uniqueness of the solutions to (\ref{18}) can be shown even for
more general drift terms, as in Cetin (2012a). For the systems with
state-dependent diffusion terms }$\sigma (t,x)$ \textit{and nonlinear drift
terms, some more regularity or monotonicity assumptions would be needed.}%
\smallskip
\end{remark}

When the expressions $\sigma (t,x)$\ and $H(t,x)$\ are time-dependent only,
we have the following Theorem which is a special case of a result from Cetin
(2012) where the driver $F$ is also allowed to depend on $y$:

\begin{theorem}
\label{Main} Assume that \newline
(i) the SDE \textit{(\ref{18b}) has a unique solution }$\tilde{X}$\textit{\
in }$L_{F}^{2}[0,T]$\textit{\ with a.s. continuous paths \newline
(ii) the value function }$V(t,x)$\textit{\ in (\ref{p3}) is a smooth
solution of (\ref{8})-(\ref{9})\ \newline
(iii)} $\sigma (t,x)=\sigma (t),$ satisfying $\left\vert \sigma
(.)\right\vert >\delta >0$ uniformly on $[0,T]$. \textit{\newline
(iv) }the function $H(t,x)=H(t)$ is differentiable, and is such that $%
\left\vert \dot{H}(.)/H(.)\right\vert $\ is bounded on $[0,T]$. \textit{%
\newline
}Then the BSDE \textit{(\ref{18})} has a unique solution $(Y,Z)$ in $%
L_{F_{T}}^{2}\times L_{F}^{2}$.
\end{theorem}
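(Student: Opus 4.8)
The plan is to reduce the quadratic-growth BSDE \eqref{18} to a Lipschitz BSDE via an exponential (Cole--Hopf type) change of variables, exploiting the special structure that, under the hypotheses, the driver is $F(t,x,z)=h(t,x)-\tfrac12 H(t)z^{2}$ with $H$ depending only on time. First I would set $\bar Y_{t}=\exp\!\bigl(-H(t)Y_{t}\bigr)$ (or a closely related transform; the precise sign and normalization will be dictated by the Itô expansion below), and apply Itô's formula. The point of the transform is that the $-\tfrac12 H(t)Z_{t}^{2}$ term in the drift of $Y$ is exactly cancelled by the second-order Itô term coming from the exponential, while the extra first-order contribution $-\dot H(t)Y_{t}$ produces, after re-expressing $Y_{t}=-H(t)^{-1}\log \bar Y_{t}$, a term of the form $\tfrac{\dot H(t)}{H(t)}\,\bar Y_{t}\log \bar Y_{t}$. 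Hypothesis (iv), boundedness of $|\dot H/H|$, is precisely what makes this $\bar Y\log\bar Y$ term Lipschitz on the relevant range once we have a priori bounds; hypothesis (iii), $|\sigma|>\delta>0$, guarantees $H$ stays bounded away from $0$ and $\infty$ so the transform and its inverse are well behaved.

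Next I would record that the transformed pair $(\bar Y,\bar Z)$, with $\bar Z_{t}=-H(t)\bar Y_{t}Z_{t}$, solves a BSDE $d\bar Y_{t}=-\bar F(t,\tilde X_{t},\bar Y_{t},\bar Z_{t})\,dt+\bar Z_{t}\,dW_{t}$ with terminal condition $\bar Y_{T}=\exp(-H(T)g(\tilde X_{T}))$, where $\bar F$ collects the transformed generator. The driver $\bar F$ is, up to the $y\log y$ term, linear in $z$ with bounded coefficients (using (iii)--(iv) and continuity of $h,\mu,\sigma,H$ on the compact $[0,T]$); the $y\log y$ term is locally Lipschitz. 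I would then invoke the existence-uniqueness theory for such BSDEs — this is exactly the setting of Cetin (2012), of which the present statement is announced to be a special case, so I may cite it; alternatively one appeals to the classical results on BSDEs with generators of logarithmic/linear growth (Briand--Hu, Kobylanski) after truncation. Uniqueness of $(Y,Z)$ follows from uniqueness of $(\bar Y,\bar Z)$ together with the fact that $y\mapsto \exp(-H(t)y)$ is a bijection of $\mathbb R$ onto $(0,\infty)$ with smooth inverse, provided we can show $\bar Y$ is strictly positive and suitably integrable, and that the inverse transform sends $L^{2}$-solutions to $L^{2}$-solutions.

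The main obstacle is the integrability bookkeeping across the transform, and it has two faces. First, because $g$ is assumed only to satisfy $g(\tilde X_{T})\in L^{1}_{F_{T}}$ (and is explicitly allowed to be unbounded, as the text stresses just before the theorem), the terminal value $\bar Y_{T}=e^{-H(T)g(\tilde X_{T})}$ need not be bounded or even integrable without further control; one must use the sign of $H$ together with moment bounds on $\tilde X$ from hypothesis (i) — and here the claimed conclusion that $(Y,Z)\in L^{2}_{F_{T}}\times L^{2}_{F}$ is what one ultimately has to certify, likely requiring that $g$ obey a one-sided growth bound compatible with $H$, which I would expect to be implicit in "smooth solution of \eqref{8}--\eqref{9}" via hypothesis (ii). Second, passing the $L^{2}$ a priori estimate back and forth through the nonlinear map $y\leftrightarrow e^{-H(t)y}$ requires exponential-moment (BMO-type) control on $\int Z\,dW$, so the real work is establishing that $\mathcal E(\int -H(r)Z_{r}\,dW_{r})$ or the relevant stochastic exponential has the integrability needed — this is the standard but delicate step in quadratic BSDE theory and is where hypotheses (iii)--(iv) are used quantitatively. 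Once those estimates are in hand, combining them with the Cetin (2012) existence-uniqueness result (or a direct Picard/truncation argument on the transformed equation) closes the proof.
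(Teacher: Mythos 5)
Your proposal follows essentially the same route as the paper: the paper gives no self-contained proof but defers to Cetin (2012a) and, in the remark immediately following the theorem, records exactly your Cole--Hopf transform $U_{t}=\exp(-H(t)Y_{t})$, $\Lambda_{t}=-H(t)U_{t}Z_{t}$, with hypothesis (iv) on $\dot H/H$ controlling the resulting $\bar Y\log\bar Y$ term. The integrability worry you raise about the terminal value is resolved by the observation (explicit in that remark) that the value function is nonnegative and $H>0$, so $0<U_{t}=\exp(-H(t)V(t,\tilde X_{t}))\le 1$ and the transformed solution is bounded.
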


\begin{remark}
Since the term $h(s,\tilde{X}(s))$\ is not bounded, the existence of a
solution is not guaranteed in general. However, if a function $v(t,x)$\
satisfies the HJB PDE (\ref{8})-(\ref{9}), then thanks to the monotonic
transformations $U_{t}=\exp (-H(t).Y)$ and $\Lambda _{t}=-H(t)U_{t}Z_{t}$,
the pair $[\exp (-H(t).v(t,\tilde{X}_{t}),-H(t)\sigma (t)v_{x}(t,\tilde{X}%
_{t})\exp (-H(t).v(t,\tilde{X}_{t}))]$ solves the BSDE with $0<U_{t}=\exp
(-H(t).v(t,\tilde{X}_{t})<1$.
\end{remark}

\begin{corollary}
\label{uniq}If the value function $V(t,x)$\ satisfies the HJB PDE (\ref{8})-(%
\ref{9}), then it is the unique solution of (\ref{8})-(\ref{9}).
\end{corollary}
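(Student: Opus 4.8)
The natural strategy is to transfer uniqueness from the BSDE side to the PDE side: by Theorem \ref{Main} the BSDE (\ref{18}) has a unique solution in $L_{F_{T}}^{2}\times L_{F}^{2}$, and any classical solution of (\ref{8})--(\ref{9}) generates such a solution through the Feynman--Kac correspondence of Lemma \ref{l1} (equivalently Theorem \ref{t1}), so two classical solutions must induce the same BSDE solution and hence coincide. Concretely, suppose $V$ and $\tilde{V}$ are both $C^{1,2}$ solutions of (\ref{8})--(\ref{9}); I want to prove $V\equiv\tilde{V}$ on $[0,T]\times\mathbb{R}$. Fix $(s,x)\in[0,T)\times\mathbb{R}$ and let $\tilde{X}=\tilde{X}^{s,x}$ be the unique continuous $L_{F}^{2}[s,T]$-solution of the forward equation (\ref{18b}), which exists by hypothesis (i) of Theorem \ref{Main}. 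Applying Ito's rule to $t\mapsto V(t,\tilde{X}_{t})$ and to $t\mapsto\tilde{V}(t,\tilde{X}_{t})$ and using that each of $V,\tilde{V}$ solves (\ref{8}), exactly as in the proof of Lemma \ref{l1}, shows that $(Y_{t}^{1},Z_{t}^{1})=(V(t,\tilde{X}_{t}),\sigma(t)V_{x}(t,\tilde{X}_{t}))$ and $(Y_{t}^{2},Z_{t}^{2})=(\tilde{V}(t,\tilde{X}_{t}),\sigma(t)\tilde{V}_{x}(t,\tilde{X}_{t}))$ both solve the BSDE (\ref{18}) on $[s,T]$ with terminal value $g(\tilde{X}_{T})$.

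To invoke the uniqueness part of Theorem \ref{Main} I must first place $(Y^{i},Z^{i})$, $i=1,2$, in the space $L_{F_{T}}^{2}\times L_{F}^{2}$. For this I would pass through the monotone substitution $U_{t}^{i}=\exp(-H(t)Y_{t}^{i})$, $\Lambda_{t}^{i}=-H(t)U_{t}^{i}Z_{t}^{i}$ of the Remark following Theorem \ref{Main}: since $V$ and $\tilde{V}$ solve the HJB PDE one has $0<U_{t}^{i}<1$, so both the transformed terminal value and the process $U^{i}$ are bounded, and together with hypotheses (iii)--(iv) (so that $\sigma$ is bounded away from $0$ and $\dot{H}/H$ is bounded) and the integrability of $\tilde{X}$ one recovers $Y^{i}\in L_{F_{T}}^{2}$, $Z^{i}\in L_{F}^{2}$. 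Theorem \ref{Main} then forces $(Y^{1},Z^{1})=(Y^{2},Z^{2})$ $dt\otimes dP$-a.e.\ on $[s,T]$; in particular $Y_{s}^{1}=Y_{s}^{2}$ a.s., and since $\tilde{X}_{s}^{s,x}=x$ this says $V(s,x)=\tilde{V}(s,x)$. As $(s,x)$ ranges over $[0,T)\times\mathbb{R}$ and $V(T,\cdot)=\tilde{V}(T,\cdot)=g$, continuity of $V$ and $\tilde{V}$ yields $V\equiv\tilde{V}$, proving the claim.

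The step I expect to be the real obstacle is precisely this integrability verification: the driver $F(t,x,z)=h(t,x)-\tfrac{1}{2}H(t)z^{2}$ is quadratic in $z$ and $h(t,\tilde{X}_{t})$ is in general unbounded, so neither candidate pair lies in an $L^{2}$ class for soft reasons, and one genuinely has to use the structure of the problem --- the two-sided bound $0<U^{i}<1$ on the exponentially transformed value process together with the control on $\dot{H}/H$ --- to enter the hypotheses under which Theorem \ref{Main} applies. Once that is in place the rest is the routine combination of BSDE uniqueness with the Markovian identity $\tilde{X}_{s}^{s,x}=x$; note also that the argument uses nothing about $V$ being the value function other than its being a classical solution of (\ref{8})--(\ref{9}), so it in fact shows that this PDE has at most one $C^{1,2}$ solution.
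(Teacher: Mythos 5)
Your argument is correct and is essentially the paper's intended proof: the corollary is stated without a separate proof immediately after Theorem \ref{Main} and the Remark on the exponential transformation, and the route you take --- both classical solutions of (\ref{8})--(\ref{9}) induce, via the It\^{o} computation of Lemma \ref{l1}, solutions of the same BSDE along $\tilde{X}^{s,x}$, the uniqueness in Theorem \ref{Main} forces these to coincide, and evaluating at $t=s$ with $\tilde{X}_{s}^{s,x}=x$ gives $V(s,x)=\tilde{V}(s,x)$ --- is exactly the mechanism the paper relies on. Your treatment of the integrability of the candidate pairs through $U_{t}=\exp(-H(t)Y_{t})$ also mirrors the paper's Remark (with the same unstated presupposition that the second solution is positive so that $0<U_{t}<1$), so there is nothing to add.
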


\subsection{2.3 A Relevant LQR Problem with State-Independent Diffusion}

Now consider the linear state dynamics $\mu (t,x)=A(t)x$ and $\sigma
(t,x)=\sigma (t)$ in addition to the quadratic cost functions $%
f(t,x,u)=e^{-\lambda t}[(x-\xi (t))^{2}+k_{1}(t)u^{2}]$ and $%
g(x)=k_{2}(x-\xi (T))^{2}$, where all the time-dependent functions are
continuous. When $\xi (.)=0$, the problem reduces to the standard LQR
problem.\ By an appeal to the standard verification theorems for the
stochastic control problems (Fleming and Soner, 2006), the system (\ref{8})-(%
\ref{9}) can be shown to have a unique smooth solution $v(t,x),$ to the
corresponding LQR optimization problem in the following form:%
\begin{eqnarray}
V(t,x) &=&P(t)x^{2}+K(t)x+N(t)  \label{lqr} \\
V(T,x) &=&k_{2}(x-\xi (T))^{2},  \notag
\end{eqnarray}%
where, for $s\leq t<T$, the functions $P,K$ and $N$ solves the system of the
ODEs below:%
\begin{eqnarray}
\dot{P}(t)+e^{-\lambda t}+2AP(t)-e^{\lambda t}\frac{B^{2}}{k_{1}}P^{2}(t)
&=&0,\text{ }P(T)=k_{2}\geq 0,  \label{ric1} \\
\dot{K}(t)+(A(t)-e^{\lambda t}\frac{B^{2}}{k_{1}}P(t))K(t)-2e^{-\lambda
t}\xi (t) &=&0,\text{ }K(T)=-2e^{-\lambda T}\xi (T),  \label{odeK} \\
\dot{N}(t)+\sigma ^{2}P(t)-\frac{e^{\lambda t}}{4}\frac{B^{2}}{k_{1}}%
K^{2}(t)+e^{-\lambda t}\xi ^{2}(t) &=&0,\text{ }N(T)=e^{-\lambda T}\xi
^{2}(T).  \label{odeN}
\end{eqnarray}%
using the notation $\dot{f}(t)\equiv \frac{df(t)}{dt}$ for time derivatives.
It is well known (e.g., Fleming and Rishel, p.89) that the Riccati ODE (\ref%
{ric1}) has a non-negative (positive if $k_{2}>0$)\ solution$\ P(.)\in
C^{1}[0,T]$. Consequently, the linear first-order equations (\ref{odeK}) and
(\ref{odeN})$\ $also have unique solutions in $C^{1}[0,T]$. Moreover, an
optimal control process is given by 
\begin{equation*}
u_{t}^{\ast }=-e^{\lambda t}\frac{B}{2k_{1}}(t)[2P(t)X_{t}^{\ast }+K(t)]
\end{equation*}%
and the optimized state process which is given by the SDE%
\begin{eqnarray}
dX_{t}^{\ast } &=&(A(t)X_{t}^{\ast }+B(t)u(t,X_{t}^{\ast }))dt+\sigma
(t)dW_{t}, \\
&=&\{(A(t)-\frac{e^{\lambda t}}{2}\frac{B}{k_{1}}K(t)-e^{\lambda t}\frac{%
B^{2}P}{k_{1}}(t))X_{t}^{\ast }\}dt+\sigma (t)dW_{t}  \notag
\end{eqnarray}%
is a Gaussian process on $[0,T]$.

The case with $\sigma (t,x)=\sigma (t)x$ is also similar: The solution is a
quadratic function of $x$ with time dependent parameters being solutions to
ODEs similar to those above but we are not going to provide the details here
(note that the Theorem \ref{Main} doesn't apply directly in this case). In
both cases, the value function can be obtained by solving the corresponding
ODEs numerically. This can be done efficiently even in high dimensions so it
is not necessary to consider a FBSDE approach to solve such problems.%
\footnote{%
Actually, it is shown in Cetin (2006) that solving a corresponding FBSDE
system will result in a sequence of iterations which are equivalent to
solving the related ODEs above using Euler discretization.}\ However when
there is no explicit solution available, then the FBSDE approach could be
preferable, especially when the corresponding control problem or when the
PDE involves a state variable in higher dimensions. We describe such a
nonlinear application which is a generalized version of an example from Tsai
(1978).

\section{3.\ Nonlinear State Dynamics with State-Independent Diffusion}

Now consider a non-linear drift term while diffusion coefficient is still
depending only on time:%
\begin{eqnarray}
dX_{t} &=&[A(t)X_{t}+\delta r(t,X_{t})+B(t)u_{t}]dt+\sigma (t)dW(t),
\label{pert} \\
X_{0} &=&x_{0}>0  \notag
\end{eqnarray}%
where $\sigma (.)$ is bounded away from $0$ on the interval $[0,T]$, $p(t,x)$%
\ is a non-linear perburtation term, and $\delta $\ is the perturbation
constant. The\ system reduces to a linear one when $\delta =0$. Let the cost
functional for this perturbed problem be given 
\begin{equation}
J^{u}(s,x)=E_{s,x}\int\limits_{s}^{T}[l(t)(X_{t}-\xi
(t))^{2}+k(t)u_{t}^{2})]dt  \label{cost1}
\end{equation}%
where $k(.)>0$ and $\xi (.)$\ is continuous on $[0,T]$.\ Define the value
function as $V(s,x)=\inf_{u\in 
\mathbb{R}
}J^{u}(s,x)$, and let the control set $\mathcal{U}$\ consist of all square
integrable adapted processes $u_{t}$ such that the equation (\ref{pert}) has
a unique solution in $L^{2}([s,T],%
\mathbb{R}
)$ (however it is sufficient to consider only the feedback controls of the
Markovian form). This quadratic optimization problem cannot be solved
explicitly unless $\delta =0$ however assuming that the SDE (\ref{pert}) has
a solution for a sufficiently rich set of the control processes, including
the candidate optimal control $u^{\ast }=\frac{-B(t)}{2k(t)}V_{x}$\ (from (%
\ref{7.5})) and $u=0$, and the optimization problem (\ref{p2}) is solvable,
we can identify a corresponding FBSDE system to characterize the solution.
Since the terminal condition is bounded (in this case zero, for simplicity),
using the methods of the parabolic PDEs and stochastic analysis, it can be
shown to have a smooth solution (as in Tsai, 1978). For more general state
equations or terminal conditions, one can only expect to get a less smooth
(viscosity) solution, using either methods of the PDEs (as in Fleming and
Soner, 2006) or those of the FBSDEs. We know state some assumptions that
will be used for the main results of this section:

\begin{condition}
\label{C}Consider the equations (\ref{pert})-(\ref{cost1}) and let $\mu
(t,x)=A(t)x+\delta r(t,x)$, $\bar{\mu}(t,x,u)=\mu (t,x)+B(t)u$.\newline
(i) The time dependent functions $A,B,k,l$ and $\sigma $ are continuous on $%
[0,T]$.\newline
(ii) The function $l(t)$ is nonnegative, $k(t)$ is positive on $[0,T]$ and
is bounded away from zero: For some $\epsilon >0,$ $k(.)>\epsilon $.\newline
(iii) The function $H(t)=\frac{B^{2}(t)}{k(t)\sigma ^{2}(t)}$ is
continuously differentiable in $(0,T)$ and is bounded away from zero.\newline
(iv) (Monotonicity condition) For all $t\in \lbrack 0,T]$ and $x\in 
\mathbb{R}
$,\ $xr(t,x)\leq C(1+x^{2}),$ for some constant $C>0$.\newline
(v) (Monotonicity condition): For all $t\in \lbrack 0,T]$ and $x,y\in 
\mathbb{R}
$,\ $(x-y)r(t,x)-r(t,y)\leq K(x-y)^{2},$ for some constant $K>0$.
\end{condition}

\begin{lemma}
\label{state} Assume that the functions $A,B$ and $\sigma $ satisfy the
Condition \ref{C} (i), $r(t,x)$ is locally Lipshitz and satisfies the
Condition \ref{C} (iv) above. Then, \newline
(a) For every initial condition $\tilde{X}_{s}=x$, the control-free state
equation%
\begin{equation}
\tilde{X}_{t}^{s,x}=x+\int\limits_{s}^{t}[A(t)\tilde{X}_{v}^{s,x}+\delta r(t,%
\tilde{X}_{v}^{s,x})dv+\int\limits_{s}^{t}\sigma (v)dW_{v}  \label{unpert}
\end{equation}%
has a unique strong solution $\tilde{X}_{t}^{s,x}$\ in $L^{p}([s,T],%
\mathbb{R}
)$, for all $p\geq 2$. Moreover, with $\tilde{C}=\max \{\delta C+\frac{p-1}{2%
}\max\limits_{s\leq t\leq T}\sigma ^{2}(t),\delta C+\max\limits_{s\leq t\leq
T}A(t)\},$ it satisfies the following moment estimate for every $t\in
\lbrack s,T]$: 
\begin{equation*}
E_{s,x}\left\vert \tilde{X}_{t}^{s,x}\right\vert ^{p}\leq 2^{\frac{p}{2}%
-1}(1+\left\vert x\right\vert ^{p})e^{\tilde{C}p(t-s)}.
\end{equation*}%
\newline
(b) Let $u_{t}=u(t,X_{t})$ be a Markovian feedback control where $u(t,x)$\
is locally Lipshitz with respect to $x$ and also satisfies Condition \ref{C}
(iv), for some generic constant $C>0$. Then the SDE (\ref{pert}) has a
unique solution in $L^{p}([s,T],%
\mathbb{R}
)$, for all $p\geq 2,$ too, and a similar moment estimate as in part (a)
holds.\newline
(c) If both $r(t,x)$ and $u(t,x)$ satisfy a linear growth condition in $x$,
then the unique solution of (\ref{pert}) is in $S^{p}([s,T],%
\mathbb{R}
)$ with the estimate 
\begin{equation*}
E_{s,x}\sup\limits_{s\leq t\leq T}\left\vert X_{t}^{s,x}\right\vert ^{p}\leq
(1+\left\vert x\right\vert ^{p})e^{C_{1}(T-s)},
\end{equation*}%
where $C_{1}$ is a constant depending on $p,T,s$ and the linear growth
factor.
\end{lemma}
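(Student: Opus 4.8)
The plan is to handle part (a) by combining a localization argument — which yields local existence and uniqueness from the local Lipschitz property of the drift — with an a priori $L^p$-moment bound extracted from the one-sided growth Condition \ref{C}(iv), the latter being what prevents explosion. First I would truncate: for each $n$ let $r_n$ coincide with $r$ on $\{|x|\le n\}$ and be globally Lipschitz, so the SDE with drift $A(t)x+\delta r_n(t,x)$ has a unique strong solution $\tilde X^{(n)}$ on $[s,T]$; with $\tau_n=\inf\{t\ge s:|\tilde X^{(n)}_t|\ge n\}$ these solutions agree on $[s,\tau_n]$ and patch to a solution up to the explosion time $\tau_\infty=\lim_n\tau_n$. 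The crucial step is the a priori estimate: apply It\^o's formula to $\phi(\tilde X_{t\wedge\tau_n})$ with $\phi(x)=(1+|x|^2)^{p/2}$. The linear term contributes $pA(t)(1+x^2)^{p/2-1}x^2\le p(\max_{[s,T]}A)\,\phi$; the perturbation term, via Condition \ref{C}(iv), contributes $p\delta(1+x^2)^{p/2-1}x\,r(t,x)\le p\delta C\,\phi$; and the second-order term is bounded by $\tfrac{p(p-1)}{2}(\max_{[s,T]}\sigma^2)\,\phi$ since $p\ge 2$. Taking expectations (the stochastic integral, stopped at $\tau_n$, is a true martingale) and applying Gronwall gives $E_{s,x}\phi(\tilde X_{t\wedge\tau_n})\le\phi(x)\,e^{\tilde Cp(t-s)}$ uniformly in $n$, with $\tilde C$ as stated.

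From this uniform bound, Chebyshev's inequality forces $P(\tau_n\le T)\to 0$, so $\tau_\infty\ge T$ a.s.\ and the solution is global on $[s,T]$; letting $n\to\infty$ and using Fatou together with $|x|^p\le\phi(x)\le 2^{p/2-1}(1+|x|^p)$ yields the claimed moment estimate. Pathwise uniqueness in $L^p$ then follows in the standard way from the local Lipschitz property: localize on $\{|\tilde X^i|\le n\}$, apply It\^o to $|\tilde X^1_t-\tilde X^2_t|^2$, run Gronwall, and send $n\to\infty$.

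For part (b), the drift becomes $A(t)x+\delta r(t,x)+B(t)u(t,x)$, still locally Lipschitz in $x$; the extra term is absorbed into the same It\^o computation because $u$ also satisfies Condition \ref{C}(iv), giving $pB(t)(1+x^2)^{p/2-1}x\,u(t,x)\le p(\max_{[s,T]}|B|)C\,\phi(x)$, which only alters the constant in the exponent. The remaining steps — localization, non-explosion, Fatou, uniqueness — are verbatim those of (a). For part (c), the additional linear-growth hypotheses make the entire drift grow at most linearly in $x$, so I would instead estimate $\sup_{s\le r\le t}|X_r|^p$ directly: bound $|X_r|^p$ by a constant times $|x|^p+\bigl|\int_s^r(\mathrm{drift})\,dv\bigr|^p+\bigl|\int_s^r\sigma\,dW\bigr|^p$, take the supremum over $r\in[s,t]$, apply the Burkholder--Davis--Gundy inequality to the martingale term and H\"older to the drift integral, insert the linear bounds on $r$ and $u$, and close with Gronwall in $\psi(t)=E_{s,x}\sup_{s\le r\le t}|X_r|^p$ (finite by part (b)), which produces the $S^p$-estimate with $C_1$ depending on $p,T,s$ and the growth factor.

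The main obstacle is the a priori moment bound in part (a): since $r$ is only locally Lipschitz with a one-sided growth bound, no off-the-shelf existence theorem applies, and everything hinges on choosing the test function $(1+|x|^2)^{p/2}$ so that Condition \ref{C}(iv) dominates the drift contribution by a linear multiple of the test function itself. Once that estimate is secured, the non-explosion conclusion, the uniqueness argument, and the sup-norm refinement of part (c) are all routine.
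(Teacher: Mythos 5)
Your proposal is correct and follows essentially the same route as the paper: the paper verifies the Lyapunov-type condition $x\mu(t,x)+\tfrac{p-1}{2}\sigma^2(t)\le\tilde C(1+x^2)$ from Condition \ref{C}(iv) and then cites the standard result (Mao 1997, Ch.~3, Thm.~4.1), whereas you write out that theorem's proof in full — truncation, It\^o applied to $(1+|x|^2)^{p/2}$, Gronwall, non-explosion, and localized pathwise uniqueness. Parts (b) and (c) likewise match the paper's (sketched) treatment.
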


\begin{proof}
(a) By (iv), for all $t\in \lbrack s,T]$ and $x\in 
\mathbb{R}
$, $x\mu (t,x)+\frac{p-1}{2}\sigma ^{2}(t)\leq (A(t)+\delta C)x^{2}+\delta C+%
\frac{p-1}{2}\sigma ^{2}(t)\leq \tilde{C}(1+x^{2})$. Since $\mu (t,x)$ is
also locally Lipshitz, the result follows from the standard estimates based
on the Lyapunov function approach. See, for example, Mao (1997), Ch. 3,
Theorem 4.1. The proof of part (b) is similar, replacing $\mu (t,x)$ with $%
\bar{\mu}(t,x,u(t,x))$,\ and part (c) is a standard result for the SDEs with
coefficients of linear growth.\newline
\end{proof}

\begin{lemma}
\label{control}Let the assumptions of Condition \ref{C} hold and $u^{\ast }$
be an optimal control rule: $u_{t}^{\ast }=u(t,X_{t}^{^{\ast }})$ with $%
V(s,x)=J^{u^{\ast }}(s,x)$ and $X_{t}^{^{\ast }}=X_{t}^{u^{\ast }}$.
Moreover, let $X_{s}^{^{\ast }}=x=\tilde{X}_{s}.$\ Then we have the
following estimates:\newline
(a) There is a positive constant $C$ such that $0\leq V(s,x)\leq C(1+x^{2})$%
, for every $x\in 
\mathbb{R}
$, uniformly on $[0,T].$\newline
(b) $E_{s,x}\int\limits_{s}^{T}\left\vert u_{t}^{\ast }\right\vert
^{2}dt\leq \frac{C}{\epsilon }(1+\left\vert x\right\vert ^{2})$ where $C$ is
as in part (a)\newline
(c) $E_{s,x}(X_{t}^{\ast }-\tilde{X}_{t})^{2}\leq \tilde{C}(1+\left\vert
x\right\vert ^{2})$, for some positive constant $\tilde{C}$.

\begin{proof}
(a) For $u=0$, since $l(.)\geq 0$, we get $V(s,x)\leq J^{0}(s,x)\leq
E_{0,x}\int\limits_{0}^{T}[l(t)(\tilde{X}_{t}-\xi (t))^{2})]dt.$ Since $\xi
(.)$ and $l(.)$\ are bounded on $[0,T]$,\ the result follows from Lemma \ref%
{state} with $p=2$. \newline
(b) By (\ref{cost1}) and assumption (ii) of Condition \ref{C}, $%
V(s,x)=E_{s,x}\int\limits_{s}^{T}[l(t)(X_{t}^{\ast }-\xi
(t))^{2}+k(t)u_{t}^{\ast 2})]dt\geq $ $\epsilon
E_{s,x}\int\limits_{s}^{T}\left\vert u_{t}^{\ast }\right\vert ^{2}dt$. Hence
the inequality is a result of part (a).\newline
(c) First note that $d(X_{t}^{\ast }-\tilde{X}_{t})=\{A(t)(X_{t}^{\ast }-%
\tilde{X}_{t})+\delta (r(t,X_{t}^{\ast })-r(t,\tilde{X}_{t}))+B(t)u_{t}^{%
\ast }\}dt$ and let $\Delta (t)=X_{t}^{\ast }-\tilde{X}_{t},$\ for $s\leq
t\leq T$. Then by Ito's rule, 
\begin{equation*}
\Delta ^{2}(t)=\int\limits_{s}^{t}2A(v)\Delta
^{2}(v)dv+\int\limits_{s}^{t}2\delta (r(t,X_{t}^{\ast })-r(t,\tilde{X}%
_{t}))\Delta (v)dv+\int\limits_{s}^{t}2B(v)u_{v}^{\ast }\Delta (v)dv.
\end{equation*}%
By applying the monotonicity assumption (v) of Condition \ref{C} to the
second integral and the inequality $2ab\leq a^{2}+b^{2}$ to the third one
above, we get 
\begin{equation*}
0\leq \Delta ^{2}(t)\leq \int\limits_{s}^{t}\{(2A(v)+2\delta K+1)\Delta
^{2}(v)dv+\int\limits_{s}^{t}(B(v)u_{v}^{\ast })^{2}dv,
\end{equation*}%
where $K$ is as in Condition \ref{C} (v). Since $B(.)$ is bounded on $[0,T]$%
, by Lemma \ref{control}, $E_{s,x}\int\limits_{s}^{t}\left\vert
B(v)u_{v}^{\ast }\right\vert ^{2}dv\leq C_{1}(1+\left\vert x\right\vert
^{2}) $ for some positive constant $C_{1}$. Moreover, $\exists M>0$\ such
that $2A(t)+2\delta K+1\leq M$, for $0\leq t\leq T$. Therefore, by letting $%
g(s,x;t)=g(t)=E_{s,x}\Delta ^{2}(t)$, we obtain the inequality $0\leq
g(t)\leq C_{1}(1+\left\vert x\right\vert ^{2})+M\int\limits_{s}^{t}g(v)dv$.
By Gronwall's inequality, $g(t)\leq C_{1}(1+\left\vert x\right\vert
^{2})(1+M\int\limits_{s}^{t}e^{M(t-v)}dv)$ and hence the result follows.
\end{proof}
\end{lemma}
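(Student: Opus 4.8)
The plan is to establish the three estimates in order, each feeding the next, with everything resting on the moment bounds of Lemma~\ref{state} and the structural hypotheses of Condition~\ref{C}.

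For part (a) I would bound $V$ from above by inserting the admissible control $u\equiv 0$ (which is assumed to lie in the control set) into the cost functional. With $u\equiv 0$ the controlled state equation (\ref{pert}) is exactly the control-free equation (\ref{unpert}), and since (\ref{cost1}) has no terminal cost, $V(s,x)\le J^{0}(s,x)=E_{s,x}\int_{s}^{T}l(t)(\tilde X_{t}-\xi(t))^{2}\,dt$. The functions $l$ and $\xi$ are continuous on $[0,T]$, hence bounded; after the elementary inequality $(\tilde X_{t}-\xi(t))^{2}\le 2\tilde X_{t}^{2}+2\xi(t)^{2}$ I apply Lemma~\ref{state}(a) with $p=2$, which gives $E_{s,x}\tilde X_{t}^{2}\le \mathrm{const}\,(1+|x|^{2})$ with a constant uniform in $t\in[s,T]$; integrating over $[s,T]$ yields the quadratic upper bound with a constant independent of $s$. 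The lower bound $V\ge 0$ is immediate since the running cost $l(t)(x-\xi(t))^{2}+k(t)u^{2}$ is nonnegative by Condition~\ref{C}(ii).

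For part (b) I would use optimality of $u^{\ast}$ together with $k(\cdot)>\epsilon$ and $l(\cdot)\ge 0$: $V(s,x)=J^{u^{\ast}}(s,x)=E_{s,x}\int_{s}^{T}[l(t)(X_{t}^{\ast}-\xi(t))^{2}+k(t)(u_{t}^{\ast})^{2}]\,dt\ge \epsilon\,E_{s,x}\int_{s}^{T}(u_{t}^{\ast})^{2}\,dt$, and combining with part (a) gives the claimed bound with the same constant $C$. For part (c) I would set $\Delta_{t}=X_{t}^{\ast}-\tilde X_{t}$; since both SDEs share the diffusion $\sigma(t)\,dW_{t}$, the difference solves the random ODE $d\Delta_{t}=[A(t)\Delta_{t}+\delta(r(t,X_{t}^{\ast})-r(t,\tilde X_{t}))+B(t)u_{t}^{\ast}]\,dt$ with $\Delta_{s}=0$, so no stochastic integral survives. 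Applying It\^o's formula to $\Delta_{t}^{2}$ (here merely the chain rule, as $\Delta$ has finite-variation paths), I handle the perturbation cross term with the one-sided Lipschitz bound Condition~\ref{C}(v), namely $\Delta_{v}(r(v,X_{v}^{\ast})-r(v,\tilde X_{v}))\le K\Delta_{v}^{2}$, and the control term with $2ab\le a^{2}+b^{2}$; this leaves $\Delta_{t}^{2}\le \int_{s}^{t}(2A(v)+2\delta K+1)\Delta_{v}^{2}\,dv+\int_{s}^{t}B(v)^{2}(u_{v}^{\ast})^{2}\,dv$. Taking expectations, bounding $2A+2\delta K+1$ by a constant $M$ on $[0,T]$, and using boundedness of $B$ with part (b) to get $E_{s,x}\int_{s}^{T}B(v)^{2}(u_{v}^{\ast})^{2}\,dv\le C_{1}(1+|x|^{2})$, I obtain $g(t):=E_{s,x}\Delta_{t}^{2}\le C_{1}(1+|x|^{2})+M\int_{s}^{t}g(v)\,dv$, and Gronwall's inequality closes the argument with $\tilde C=C_{1}e^{M(T-s)}$.

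The only real obstacle lies in part (c): the perturbation $r(t,\cdot)$ is assumed only locally Lipschitz, not globally, so $r(t,X_{t}^{\ast})-r(t,\tilde X_{t})$ cannot be controlled pathwise in the naive way; the monotonicity assumption Condition~\ref{C}(v) is precisely what makes the Gronwall estimate go through. One should also be mildly careful that it is the a priori $L^{2}$-integrability of $\Delta$ (from Lemma~\ref{state}) and of $u^{\ast}$ (from part (b)) that legitimizes taking expectations in the It\^o expansion in the first place; parts (a) and (b) are otherwise routine suboptimality/coercivity arguments.
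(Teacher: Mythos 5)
Your proposal is correct and follows essentially the same route as the paper's own proof: suboptimality of $u\equiv 0$ plus the $p=2$ moment bound of Lemma \ref{state} for (a), coercivity $k(\cdot)>\epsilon$ for (b), and the difference equation for $\Delta_t$ with the one-sided Lipschitz condition (v), the inequality $2ab\le a^{2}+b^{2}$, and Gronwall for (c). Your added remarks on the finite-variation nature of $\Delta$ and the integrability needed to take expectations are sensible refinements of the same argument, not a different approach.
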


Now, by an appeal to Lemma \ref{state} part (a) with $p=2$, and part (c) of
the Lemma above, we get the following Corollary:

\begin{corollary}
(a) $E_{s,x}\left\vert X_{t}^{\ast }\right\vert ^{2}\leq K_{1}(1+\left\vert
x\right\vert ^{2})$, for some positive constant $K_{1}$.\newline
(b) $E_{s,x}\left\vert X_{t}^{\ast }\right\vert \leq K_{2}(1+\left\vert
x\right\vert )$, for some positive constant $K_{2}$.
\end{corollary}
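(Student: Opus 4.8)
The plan is to obtain both estimates directly from the decomposition $X_t^{\ast} = (X_t^{\ast} - \tilde X_t) + \tilde X_t$, together with the two bounds already in hand. Recall that in Lemma \ref{control} the processes $X^{\ast}$ and $\tilde X$ were started from the common initial datum $X_s^{\ast}=x=\tilde X_s$, so this decomposition is legitimate. First I would apply the elementary inequality $(a+b)^2\le 2a^2+2b^2$ to write
\[
\left\vert X_t^{\ast}\right\vert^2 \le 2\left(X_t^{\ast}-\tilde X_t\right)^2 + 2\left\vert \tilde X_t\right\vert^2 ,
\]
take $E_{s,x}[\,\cdot\,]$ of both sides, and bound the first term by Lemma \ref{control}(c) and the second by Lemma \ref{state}(a) with $p=2$ (for which the stated estimate reads $E_{s,x}\left\vert \tilde X_t\right\vert^2 \le (1+\left\vert x\right\vert^2)e^{2\tilde C(t-s)}$). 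This yields
\[
E_{s,x}\left\vert X_t^{\ast}\right\vert^2 \le 2\tilde C(1+\left\vert x\right\vert^2) + 2(1+\left\vert x\right\vert^2)e^{2\tilde C(t-s)} .
\]
Since $t-s\le T$ and the constants $\tilde C$ appearing in Lemmas \ref{state} and \ref{control} do not depend on $t$, the exponential is bounded by $e^{2\tilde C T}$, and setting $K_1 := 2\tilde C + 2e^{2\tilde C T}$ gives part (a) with a constant uniform in $t\in[s,T]$.

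For part (b) I would simply invoke the conditional Jensen (Cauchy--Schwarz) inequality $E_{s,x}\left\vert X_t^{\ast}\right\vert \le \bigl(E_{s,x}\left\vert X_t^{\ast}\right\vert^2\bigr)^{1/2}$, substitute part (a), and use $\sqrt{1+\left\vert x\right\vert^2}\le 1+\left\vert x\right\vert$ to conclude $E_{s,x}\left\vert X_t^{\ast}\right\vert \le \sqrt{K_1}\,(1+\left\vert x\right\vert)$, i.e. $K_2=\sqrt{K_1}$.

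There is no substantive obstacle here; the corollary is a routine consequence of the preceding two lemmas. The only point deserving attention is the bookkeeping of constants: one must check that the growth constants in Lemma \ref{state}(a) and Lemma \ref{control}(c) are genuinely independent of $t$ — which they are, being built from $\sup_{[0,T]}$ of the (continuous, hence bounded) coefficients and from a Gronwall argument on $[s,T]$ — so that $K_1$ and $K_2$ can indeed be chosen uniformly over $[0,T]$ as asserted.
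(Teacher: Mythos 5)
Your argument is exactly the one the paper intends: the corollary is stated as following "by an appeal to Lemma \ref{state} part (a) with $p=2$, and part (c) of the Lemma above," and your decomposition $X_t^{\ast}=(X_t^{\ast}-\tilde X_t)+\tilde X_t$ with $(a+b)^2\le 2a^2+2b^2$, followed by Cauchy--Schwarz for part (b), is the standard way to fill in those details. The proof is correct; only note that the constant $\tilde C$ from Lemma \ref{state}(a) and the constant $\tilde C$ from Lemma \ref{control}(c) are distinct, so they should be given different names before being combined into $K_1$.
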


\begin{theorem}
\label{Main1}Let the assumptions (i)-(v) hold\ and consider the perturbed
state dynamics (\ref{pert})\ together with the cost function (\ref{cost1})
and the value function $V(s,x)$. Then, \newline
(a) The value function $V(s,x)$ is in $C^{1,2}([0,T]\times 
\mathbb{R}
)$ such that

(i) it satisfies the HJB equation%
\begin{eqnarray}
v_{t}(t,x)+\frac{1}{2}\sigma ^{2}(t)v_{xx}(t,x)+F(t,x,\sigma
(t)v_{x})+\delta r(t,x)v_{x}(t,x) &=&0  \label{Pde} \\
v(T,x) &=&0,  \notag
\end{eqnarray}%
\ \ where $F(t,x,z)=(x-\xi (t))^{2}-\frac{H(t)z^{2}}{2}$ over $[0,T]\times 
\mathbb{R}
$.

(ii) $\exists C>0$ such that $\left\vert V_{x}(s,x)\right\vert \leq
C(1+\left\vert x\right\vert ),$ uniformly for $s\in \lbrack 0,T]$.\newline
(b) The process $u_{t}^{\ast }=u(t,X_{t})=\frac{-B(t)}{2k(t)}V_{x}(t,X_{t})$%
\ is an admissible feedback control rule such that $u(s,x)$ is the unique
minimizer of $J^{u}(s,x)$, for all $(s,x,u)\in \lbrack 0,T]\times 
\mathbb{R}
\times 
\mathbb{R}
$. \ Moreover, $u(.,x)$ is a locally Lipshitz function of $x$, and satisfies
a linear growth condition in $x$ (as in part (a)(ii) above).
\end{theorem}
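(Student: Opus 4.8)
The plan is to establish the regularity of $V$ by a combination of PDE estimates and stochastic-control arguments, then derive the feedback law from the first-order condition in the HJB equation. I would proceed in four stages.

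\emph{Stage 1: The HJB equation and a priori growth.} First I would write down the formal dynamic programming equation for the problem \eqref{pert}--\eqref{cost1}. Because only the drift is control-dependent and the running cost is quadratic in $u$ with positive coefficient $k(t)\geq\epsilon$, the inner infimum $\inf_u\{k(t)u^2 + B(t)uv_x\}$ is attained at $u^\ast = -B(t)v_x/(2k(t))$ with value $-B^2(t)v_x^2/(4k(t))$, exactly as in \eqref{7.5}. Substituting gives \eqref{Pde} with $F(t,x,z)=(x-\xi(t))^2-\tfrac{1}{2}H(t)z^2$, $H(t)=B^2(t)/(k(t)\sigma^2(t))$. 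By Condition \ref{C}(iii), $H$ is continuously differentiable and bounded away from zero; by (i) and the fact that $\sigma$ is bounded away from $0$, the equation is uniformly parabolic on $[0,T]$. The quadratic growth of $V$, namely $0\leq V(s,x)\leq C(1+x^2)$, is already furnished by Lemma \ref{control}(a), and this is the a priori bound that controls the nonlinearity $-\tfrac12 H(t)(\sigma v_x)^2$.

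\emph{Stage 2: Existence of a smooth solution.} The quasilinear equation \eqref{Pde} has a gradient-quadratic nonlinearity and a terminal datum $g\equiv 0$ that is bounded; the drift perturbation $\delta r(t,x)v_x$ is, by Condition \ref{C}(iv)--(v), of at most linear growth in $x$ with a one-sided Lipschitz structure. The standard route (as in Tsai, 1978, or Fleming and Soner, 2006, Ch.~IV and VI) is to truncate the coefficients and the domain, solve the resulting uniformly parabolic problem on bounded domains by classical Schauder theory, obtain interior Schauder and $L^p$ estimates that are uniform in the truncation parameter using the a priori bound from Stage 1 together with a Bernstein-type gradient estimate for $v_x$, and pass to the limit to produce a solution $v\in C^{1,2}([0,T]\times\mathbb{R})$ with $|v(t,x)|\leq C(1+x^2)$ and $|v_x(t,x)|\leq C(1+|x|)$. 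Then a verification theorem identifies $v=V$: for any admissible $u$, applying It\^o to $v(t,X_t^{u})$ and using the HJB inequality gives $J^u(s,x)\geq v(s,x)$, while the feedback $u^\ast(t,x)=-B(t)v_x(t,x)/(2k(t))$ achieves equality — here one needs that this feedback is admissible, which follows from the linear growth of $v_x$ via Lemma \ref{state}(b),(c). This simultaneously proves (a)(i) and (a)(ii), and the nonnegativity $V\geq 0$ is immediate from $l\geq 0$, $k>0$.

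\emph{Stage 3: The feedback control.} Part (b) then follows directly. Since $v_x$ is $C^1$ in $x$ (indeed $v\in C^{1,2}$), $u(t,x)=-B(t)v_x(t,x)/(2k(t))$ is locally Lipschitz in $x$, and by (a)(ii) it satisfies a linear growth bound $|u(t,x)|\leq C(1+|x|)$ with a constant uniform in $t$, since $B,k$ are continuous on $[0,T]$ with $k\geq\epsilon$. Uniqueness of the minimizer of $J^u(s,x)$ pointwise in $(s,x)$ is just the strict convexity of $u\mapsto k(t)u^2+B(t)uv_x$, so the arg-min is the single point $u^\ast$. Admissibility of the feedback process $u^\ast_t=u(t,X_t)$ follows by invoking Lemma \ref{state}(b) to get a unique strong solution of the closed-loop SDE in $L^p$, and Lemma \ref{control}(b) (or the square-integrability implied by Stage 2) to confirm $u^\ast\in L^2_F([s,T])$.

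\emph{Main obstacle.} The delicate point is Stage 2: obtaining the uniform gradient estimate $|v_x(t,x)|\leq C(1+|x|)$ for the truncated problems in a way that survives the limit. The quadratic term $-\tfrac12 H(t)(\sigma v_x)^2$ has the \emph{good} sign for a maximum-principle/Bernstein argument on $v$ itself (it pushes toward existence rather than blow-up, which is also why the monotone transformation $U=\exp(-H(t)Y)$ in the Remark after Theorem \ref{Main} keeps $U$ bounded in $(0,1)$), but the unbounded running cost $(x-\xi(t))^2$ and the perturbation $\delta r$ mean the estimate is only quadratic/linear rather than global, so one must carefully track the dependence on the growth constants $C,K$ from Condition \ref{C}(iv)--(v) and on $\|1/H\|_\infty$, $\|\dot H/H\|$ is not needed here but $\|H\|_\infty$ and its positive lower bound are. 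An alternative that avoids re-deriving these PDE estimates is to quote the result of Tsai (1978) for the structurally identical bounded-terminal-cost case and note that Condition \ref{C}(iv)--(v) is exactly what is required to extend his argument to the present perturbation $\delta r$.
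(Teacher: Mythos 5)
Your proposal is correct and follows essentially the same route as the paper: part (a) via truncation/localization to compact domains, uniform a priori (quadratic value, linear gradient) estimates, and passage to the limit as in Tsai (1978) and Fleming--Rishel, followed by a verification argument; part (b) via strict convexity of $u\mapsto k(t)u^{2}+B(t)uV_{x}$ and the regularity of $V_{x}$ together with Lemmas \ref{state} and \ref{control} for admissibility. You in fact supply more detail (Schauder/Bernstein estimates) than the paper, which simply cites Lemmas 3.1--3.2 of Tsai (1978) for these steps.
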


\begin{proof}
(a) The proof relies on the approximation of the domain $[0,T]\times 
\mathbb{R}
$ by the compact subsets, and the standard but lenghty localization
arguments and passages to the limit for the Cauchy problems of second order
parabolic equations, as in Tsai (1978) and Fleming and Rishel (1975). We
skip these technical details since the steps involved are very similar to
those of Lemma 3.1 and Lemma 3.2 of Tsai (1978).\newline
(b) From the more general equations (\ref{HjbE}) and (\ref{7.5}), it is easy
to see that $u(s,x)=\frac{-B(t)}{2k(t)}V_{x}(t,x)$ is the unique minimizer
of $J^{u}(s,x)$ in (\ref{cost1}). It has a linear growth in $x$ by part
(a)(ii), and is square integrable by Lemma \ref{control}. It is locally
Lipshitz in $x$ uniformly in $t$ since $\frac{B(t)}{2k(t)}$ is bounded on $%
[0,T]$, and $V_{x}(t,x)$ is differentiable with respect to $x$.

\begin{remark}
Since no specific growth condition is assumed on the perturbation term $%
r(t,x)$, except the monotonicty and local Lipshitz properties, the standard
verification theorems and dynamic programming principle of stochastic
control theory may not directly apply. Again, some localization techniques
would help to ensure the uniqueness of the solutions to the PDE (\ref{Pde}).%
\footnote{%
Tsai (1978) take advantage of the differentiability (and the existence of an
upper bound on the derivative) that doesn't apply here.} However we follow a
BSDE approach to prove the uniqueness. We then provide a probabilistic
representation of both the value function and the optimal control.
\end{remark}
\end{proof}

\begin{theorem}
\label{Main2}Let $\tilde{X}_{t}^{s,x}$, $X_{t}^{\ast },$\ $u_{t}^{\ast }$
and $V(t,x)$\ be as before, and introduce the pair $%
(Y_{t}^{s,x},Z_{t}^{s,x}) $ as $Y_{t}=V(t,\tilde{X}_{t})$ and $Z_{t}=\sigma
(t)V_{x}(t,\tilde{X}_{t})$. Then,

(i) The pair $(Y_{t}^{s,x},Z_{t}^{s,x})$ is the unique solution to the BSDE 
\begin{equation}
dY_{t}^{s,x}=-F(t,\tilde{X}_{t},Z_{t})dt+Z_{t}dW_{t}\text{, }Y_{T}^{s,x}=0.
\end{equation}

Moreover, $V(t,x)=Y_{t}^{t,x}$ (the value of $Y_{t}$ when $X_{t}=x$)$,$ and $%
u(t,x)=\frac{-B}{2k\sigma }(t)Z_{t}^{t,x}$\ for $x>0$ and $t\in \lbrack 0,T)$%
. An optimal (feedback) control for the problem is given .

(ii) The value function is the unique classical solution of the PDE (\ref%
{Pde}) with a quadratic growth in $x$.\ Moreover, the control process $%
u_{t}^{\ast }$ is the unique optimal control for the optimization problem (%
\ref{pert})-(\ref{cost1}).
\end{theorem}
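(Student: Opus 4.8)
The plan is to combine the PDE regularity already secured in Theorem \ref{Main1} with the existence-uniqueness theory for the associated BSDE, following the structure of Lemma \ref{l1} and Theorem \ref{Main}. For part (i), I would first apply It\^o's formula to $Y_t = V(t,\tilde X_t)$, where $\tilde X$ is the control-free forward process from \eqref{unpert}. Using that $V \in C^{1,2}$ (Theorem \ref{Main1}(a)) and that $V$ solves the HJB PDE \eqref{Pde}, the drift of $dY_t$ collapses exactly to $-F(t,\tilde X_t, Z_t)$ with $Z_t = \sigma(t) V_x(t,\tilde X_t)$, in the same way the computation in the proof of Lemma \ref{l1} produced $dY = -\hat F(t,\hat X,Z)\,dt + Z\,d\hat W$; the only difference here is that the forward drift $\mu(t,x)=A(t)x+\delta r(t,x)$ is retained rather than eliminated, so the $\mu v_x$ term in the PDE cancels the corresponding It\^o term and no Girsanov change is needed. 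This shows $(Y,Z)$ is \emph{a} solution of the BSDE. To see it is in $L^2_{\digamma_T}\times L^2_\digamma$: the bound $|V(s,x)|\le C(1+x^2)$ from Lemma \ref{control}(a) together with the moment estimate $E_{s,x}|\tilde X_t|^p \le 2^{p/2-1}(1+|x|^p)e^{\tilde C p(t-s)}$ from Lemma \ref{state}(a) (used with $p=4$) gives $Y\in S^2$, and the linear-growth bound $|V_x(s,x)|\le C(1+|x|)$ from Theorem \ref{Main1}(a)(ii) together with the same moment estimate (with $p=2$) gives $Z\in L^2_\digamma$.

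For uniqueness of $(Y,Z)$, I would invoke Theorem \ref{Main} directly: Condition \ref{C}(i)--(iii) supply exactly hypotheses (iii) and (iv) of Theorem \ref{Main} ($\sigma=\sigma(t)$ bounded away from $0$, and $H(t)=B^2/(k\sigma^2)$ continuously differentiable and bounded away from zero, hence $|\dot H/H|$ bounded on $[0,T]$), Lemma \ref{state}(a) supplies hypothesis (i) (the SDE \eqref{unpert} has a unique $L^2$ solution with continuous paths), and Theorem \ref{Main1}(a) supplies hypothesis (ii) (the value function is a smooth solution of \eqref{Pde}, which is the form \eqref{8}--\eqref{9} with $\mu=\delta r$ absorbed appropriately). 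Theorem \ref{Main} then gives uniqueness of $(Y,Z)$ in $L^2_{\digamma_T}\times L^2_\digamma$. The identification $V(t,x)=Y_t^{t,x}$ is immediate from $Y_t=V(t,\tilde X_t)$ evaluated on $\{\tilde X_t=x\}$, and $u(t,x)=\frac{-B}{2k}V_x(t,x)=\frac{-B}{2k\sigma}\cdot\sigma V_x(t,x)=\frac{-B}{2k\sigma}(t)Z_t^{t,x}$ follows from the definition of $Z$ and the formula \eqref{7.5} for the minimizer; admissibility of $u^\ast_t=u(t,X^\ast_t)$ was already established in Theorem \ref{Main1}(b).

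For part (ii), uniqueness of the classical solution of the PDE \eqref{Pde} among functions of quadratic growth is Corollary \ref{uniq}: if $v$ is another such solution, the pair $(v(t,\tilde X_t),\sigma v_x(t,\tilde X_t))$ solves the same BSDE (by the same It\^o computation, and it lies in the right spaces by the quadratic-growth assumption plus Lemma \ref{state}), so by the uniqueness just proved $v(t,\tilde X_t)=Y_t=V(t,\tilde X_t)$ a.s.\ for every $(t,x)$, whence $v\equiv V$. Finally, optimality of $u^\ast_t$: by Theorem \ref{Main1}(b), $u(s,x)$ is the pointwise unique minimizer in the HJB equation, and since $V\in C^{1,2}$ with quadratic growth, the standard verification argument — apply It\^o to $V(t,X^u_t)$ for an arbitrary admissible $u$, use \eqref{p8}/\eqref{HjbE} to get that the drift is $\ge -f(t,X^u_t,u_t)$, take expectations (the local martingale term is a true martingale by the growth bounds and Lemma \ref{state}(b),(c)) — yields $J^u(s,x)\ge V(s,x)$ with equality iff $u_t=u(t,X_t)$ $dt\otimes dP$-a.e., giving both optimality and uniqueness of the optimal control.

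The main obstacle I anticipate is the verification-theorem step in part (ii): because no growth condition is imposed on the perturbation $r(t,x)$ beyond local Lipschitz and the monotonicity bounds \ref{C}(iv)--(v), one cannot quote an off-the-shelf verification theorem, and I would need a localization argument (stopping at exit times $\tau_n$ of balls $\{|X^u_t|\le n\}$, applying It\^o on $[s,t\wedge\tau_n]$, then letting $n\to\infty$ using the uniform-in-$n$ moment estimates from Lemma \ref{state}(b),(c) and the quadratic growth of $V$ to control $E[V(t\wedge\tau_n,X^u_{t\wedge\tau_n})]$ and to kill the boundary contribution). The BSDE route via Theorem \ref{Main} sidesteps the uniqueness question cleanly, but tying the BSDE solution back to genuine optimality of $u^\ast$ for \emph{all} admissible $u$ — not merely Markovian feedbacks — still requires this careful truncation, and that is where the real work lies.
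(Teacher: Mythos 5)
Your proposal follows essentially the same route as the paper: the It\^o computation of Lemma \ref{l1} for the representation, Theorem \ref{Main} and Corollary \ref{uniq} for uniqueness of the BSDE and PDE solutions, and Theorem \ref{Main1} plus a verification argument for optimality. You supply more detail than the paper does (the integrability checks via Lemma \ref{state} and Lemma \ref{control}, and the localization needed in the verification step, which the paper only alludes to in the remark after Theorem \ref{Main1}), but the structure is the same and the argument is sound.
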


\begin{proof}
The uniqueness follows from the Theorem \ref{Main} and the Corollary \ref%
{uniq}. Then the optimality is a direct result of Part (ii) is a result of
Theorem \ref{Main1} and the uniqueness. The BSDE representation in (i) is
obtained similar to that of (\ref{18})-(\ref{18b})\ and Remark \ref{rep} by
following the same steps as in \textit{Lemma \ref{l1}. }
\end{proof}

\begin{corollary}
The triple $(\tilde{X}_{t}^{s,x},Y_{t}^{s,x},Z_{t}^{s,x})$ satisfies\newline
(a) $\sup\limits_{s\leq t\leq T}E_{s,x}\left\vert Y_{t}\right\vert
+E_{s,x}[\int\limits_{s}^{T}Z_{t}^{2}dt+\int\limits_{s}^{T}\tilde{X}%
_{t}^{2}dt]\leq C(1+x^{2})$.\newline
(b) If the perturbation term $r(t,x)$ has a linear growth, then $%
E_{s,x}\sup\limits_{s\leq t\leq T}\left\vert Y_{t}\right\vert
+E_{s,x}[\int\limits_{s}^{T}Z_{t}^{2}dt+\int\limits_{s}^{T}\tilde{X}%
_{t}^{2}dt]\leq C(1+x^{2})$\ also holds.
\end{corollary}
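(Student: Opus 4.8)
The plan is to derive everything from the representation established in Theorem \ref{Main2}, namely $Y_t^{s,x}=V(t,\tilde X_t^{s,x})$ and $Z_t^{s,x}=\sigma(t)V_x(t,\tilde X_t^{s,x})$, combined with the two a priori bounds already in hand: the polynomial growth of the value function and its gradient, $0\le V(t,x)\le C(1+x^2)$ from Lemma \ref{control}(a) and $|V_x(t,x)|\le C(1+|x|)$ uniformly in $t$ from Theorem \ref{Main1}(a)(ii); and the forward moment estimates from Lemma \ref{state}. Since $\tilde X$ solves (\ref{unpert}), which is the control-free case ($u\equiv 0$) of (\ref{pert}), Lemma \ref{state}(a) applies with $p=2$, and, when $r$ has linear growth, Lemma \ref{state}(c) applies with $p=2$ and $u\equiv 0$. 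Note $\sigma$ is continuous on $[0,T]$ by Condition \ref{C}(i), hence bounded by some $\sigma_{\max}$, and $\tilde C$ of Lemma \ref{state} can be dominated by its $s=0$ value, so all constants below can be taken uniform in $s\in[0,T]$.

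For part (a): from $|Y_t|=|V(t,\tilde X_t)|\le C(1+\tilde X_t^2)$ and Lemma \ref{state}(a) with $p=2$ we get $E_{s,x}|Y_t|\le C\bigl(1+(1+x^2)e^{2\tilde C(T-s)}\bigr)$, a bound independent of $t$, so taking $\sup_{s\le t\le T}$ costs nothing. For the integral terms, $|Z_t|=\sigma(t)|V_x(t,\tilde X_t)|\le \sigma_{\max}C(1+|\tilde X_t|)$ gives $Z_t^2\le C'(1+\tilde X_t^2)$, while $\tilde X_t^2$ is trivially controlled; integrating over $[s,T]$ and interchanging the expectation with the finite deterministic time integral by Tonelli (all integrands nonnegative and measurable) yields $E_{s,x}\int_s^T Z_t^2\,dt+E_{s,x}\int_s^T\tilde X_t^2\,dt\le C''(T-s)(1+x^2)$, consistent with (indeed refining) the $L^2_F$ membership of $Z$ from Theorem \ref{Main}. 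Summing the three estimates with one constant proves (a).

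For part (b): the only modification is to move the supremum inside, using $\sup_{s\le t\le T}|Y_t|=\sup_{s\le t\le T}|V(t,\tilde X_t)|\le C\bigl(1+\sup_{s\le t\le T}\tilde X_t^2\bigr)$, so $E_{s,x}\sup_{s\le t\le T}|Y_t|\le C\bigl(1+E_{s,x}\sup_{s\le t\le T}\tilde X_t^2\bigr)$; under the linear-growth hypothesis on $r$, Lemma \ref{state}(c) with $p=2$ and $u\equiv 0$ bounds $E_{s,x}\sup_{s\le t\le T}\tilde X_t^2\le(1+x^2)e^{C_1(T-s)}$. The two time-integral terms are handled exactly as in part (a), since those estimates never used a pathwise supremum. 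Combining proves (b).

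I expect no genuine obstacle; the corollary is essentially a bookkeeping consequence of the earlier lemmas. The only points requiring care are: (i) the Tonelli interchange, which is immediate from nonnegativity of the integrands; (ii) recording explicitly that the estimates of Lemma \ref{state} apply to $\tilde X$ because $\tilde X$ is the $u\equiv 0$ specialization of (\ref{pert}); and (iii) resisting the natural temptation to obtain $E_{s,x}\int_s^T Z_t^2\,dt$ by applying It\^o's formula to $|Y_t|^2$ with $Y_T=0$ — that route leaves a term $E_{s,x}\int_s^T H(t)Y_tZ_t^2\,dt$ on the right (because the driver $F(t,x,z)$ is quadratic in $z$), which is not obviously absorbable since $H(t)Y_t$ is only polynomially, not uniformly, bounded; the direct gradient-bound argument above avoids this altogether.
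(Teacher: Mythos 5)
Your proof is correct and follows exactly the route the paper intends: it combines the representation $Y_t=V(t,\tilde X_t)$, $Z_t=\sigma(t)V_x(t,\tilde X_t)$ from Theorem \ref{Main2} with the quadratic growth of $V$ (Lemma \ref{control}(a)), the linear growth of $V_x$ (Theorem \ref{Main1}(a)(ii)), and the moment estimates of Lemma \ref{state}(a) for part (a) and Lemma \ref{state}(c) for part (b), each with $p=2$ — which is precisely the list of ingredients the paper's own (one-line) proof cites. Your write-up simply supplies the bookkeeping the paper omits, including the sensible warning against the It\^o-on-$|Y|^2$ route.
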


\begin{proof}
It is a result of Lemma \ref{control}, Theorem \ref{Main1} part (a) (ii),
and Theorem \ref{Main2}, by utilizing Lemma \ref{state} (a) for the proof of
part (a), and Lemma \ref{state} (c) for the proof of part (b), with $p=2$.
\end{proof}

The following example is a slightly generalization of an application from
Tsai (1978). The well-posedness of more general cases are discussed in Cetin
(2012a).

\begin{example}
Consider the following controlled state dynamics with constant parameters: 
\begin{eqnarray*}
dX_{t} &=&(-\delta X_{t}^{3}+Bu_{t})dt-\sigma dW_{t} \\
X_{0} &=&x_{0}>0.
\end{eqnarray*}%
Let the cost functional for this perturbed problem be given by $J^{\delta
,u}(s,x)=E_{s,x}\int\limits_{s}^{1}[(X_{t}-\xi )^{2}+ku_{t}^{2})]dt$ where $%
k>0$ and the value function is $V^{\delta }(s,x)=\inf_{u}J^{\delta ,u}(s,x)$%
. By the Theorem above, $V^{\delta }(s,x)$\ is the unique solution to the
PDE 
\begin{eqnarray*}
v_{t}(t,x)+\frac{1}{2}\sigma ^{2}v_{xx}(t,x)+(x-\xi )^{2}-\delta
x^{3}v_{x}(t,x)-Cv_{x}^{2}(t,x)/4 &=&0 \\
v(T,x) &=&0
\end{eqnarray*}%
\ where $C=B^{2}/k>0$ over $[0,T]$.\ Again, it is not likely to obtain an
explicit solution of this nonlinear and one needs to follow a numerical
procedure to solve the equation and hence describe the behavior of the
optimal action (control). Note that for the unperturbed LQR problem ($\delta
=0$) with the constant parameters $C$ and $\xi $,\ it can be verified (using
the results in subsection 2.3) that the solution is given by the quadratic
expression $V^{0}(s,x)=\lambda (t)(x-\xi )^{2}+\gamma (t)$, where $\lambda
(t)=\frac{\tanh (\sqrt{C}(1-t))}{\sqrt{C}}$ and $\gamma (t)=\frac{\sigma ^{2}%
}{C}\ln \cosh (\sqrt{C}(1-t))$. Moreover, the optimal control $u_{t}^{\ast
,0}$ is a linear feedback control: $u_{t}^{\ast ,0}=-\frac{B}{2k}%
(t)V_{x}^{0}(t,x)=\frac{-sign(B)}{\sqrt{k}}\tanh (\frac{\left\vert
B\right\vert }{\sqrt{k}}(1-t))(x-\xi )$. When the state equation deviates
from a linear dynamics significantly, then it may be important to know how
the corresponding optimal action and the value function differ from the
unperturbed LQR setup. The discussion of this problem and its numerical
solution using a probabilistic appraoch are considered in Cetin (2012b). One
can also refer to Tsai (1978) and Nishikawa et al. (1976) for a PDE approach
for the details. Some results are given below without proof:

\begin{itemize}
\item The value function $V^{\delta }(s,x)$ can be approximated as follows:%
\begin{equation*}
V^{\delta }(s,x)=V^{0}(s,x)+2\delta \lbrack
K_{1}(s)x^{4}+K_{2}(s)x^{2}]+O(\delta ^{2})
\end{equation*}%
where $V^{0}(s,x)$ is\ the value function for the unperturbed problem and $%
K_{1}(.),K_{2}(.)\in C^{1}[0,1].$

\item The optimal control is%
\begin{equation*}
u^{\ast ,\delta }(s,x)=u^{\ast ,0}(s,x)-2\delta \lbrack
4K_{1}(s)x^{3}+2K_{2}(s)x^{3}]+O(\delta ^{2})
\end{equation*}%
where $u^{\ast ,0}(s,x)$ is as above.

\item $u^{\ast ,\delta }(s,x)\rightarrow u^{\ast ,0}(s,x)$, as $\delta
\rightarrow 0$ uniformly on $[0,T]\times Q$, for any compact set $Q$\ of $%
\mathbb{R}
.$
\end{itemize}
\end{example}

\renewcommand{\baselinestretch}{1}\small\normalsize%

\end{document}